\RequirePackage{fix-cm}
\documentclass[english]{amsart}

\usepackage{graphicx}
\usepackage[all]{xy}
\usepackage {float}
\usepackage{amsthm}
\usepackage{amsmath}
\usepackage{amssymb}

\usepackage[T1]{fontenc}
\usepackage[latin9]{inputenc}
\usepackage{float}
\usepackage{amstext}
\usepackage{fixltx2e}
\usepackage{graphicx}
\usepackage[all]{xy}

\numberwithin{equation}{section}
\numberwithin{figure}{section}
  \theoremstyle{plain}
  \newtheorem*{theorem*}{\protect\theoremname}
\theoremstyle{plain}
\newtheorem{theorem}{\protect\theoremname}[section]
  \theoremstyle{definition}
  \newtheorem{definition}[theorem]{\protect\definitionname}
  \theoremstyle{remark}
  \newtheorem*{remark*}{\protect\remarkname}
 \theoremstyle{definition}
 \newtheorem*{definition*}{\protect\definitionname}
  \theoremstyle{definition}
  \newtheorem{example}[theorem]{\protect\examplename}
  \theoremstyle{plain}
  \newtheorem{proposition}[theorem]{\protect\propositionname}
  \theoremstyle{remark}
  \newtheorem{remark}[theorem]{\protect\remarkname}
  \theoremstyle{plain}
  \newtheorem{lemma}[theorem]{\protect\lemmaname}
  \theoremstyle{plain}
  \newtheorem{corollary}[theorem]{\protect\corollaryname}
  \theoremstyle{plain}
  \newtheorem*{proposition*}{\protect\propositionname}

\usepackage{hyperref}
\usepackage{xcolor}
\hypersetup{colorlinks=false,linkbordercolor=red,linkcolor=green,pdfborderstyle={/S/U/W 1}
}
\makeatother

\usepackage{babel}
  \providecommand{\corollaryname}{Corollary}
  \providecommand{\definitionname}{Definition}
  \providecommand{\examplename}{Example}
  \providecommand{\lemmaname}{Lemma}
  \providecommand{\propositionname}{Proposition}
  \providecommand{\remarkname}{Remark}
  \providecommand{\theoremname}{Theorem}
\providecommand{\theoremname}{Theorem}

\begin{document}

\title{The Brill-Noether rank of a tropical curve
}

\author{Yoav Len
}

\begin{abstract}
We construct a space classifying divisor classes of a fixed degree
on all tropical curves of a fixed combinatorial type and show that
the function taking a divisor class to its rank is upper semicontinuous.
We extend the definition of the Brill-Noether rank of a metric graph
to tropical curves and use the upper semicontinuity of the rank function
on divisors to show that the Brill-Noether rank varies upper semicontinuously
in families of tropical curves. Furthermore, we present a specialization
lemma relating the Brill-Noether rank of a tropical curve with the
dimension of the Brill-Noether locus of an algebraic curve.

\end{abstract}

\maketitle

\section{Introduction}
\label{intro}

Let $\Gamma$ be a tropical curve in the sense of \cite{CaporasoAmini},
so that $\Gamma=(G,w,\ell)$ is a connected compact metric graph with a non-negative
integer weight function on its vertices. For integers $d$ and $r$,
the Brill-Noether locus of $\Gamma$, denoted $W_{d}^{r}(\Gamma)$,
is the set of equivalence classes of divisors of degree $d$ and rank
at least $r$. The Brill-Noether rank $w_{d}^{r}(\Gamma)$ of the
curve $\Gamma$ is the largest number $\rho$ such that every effective
divisor of degree $r+\rho$ is contained in an effective divisor of
degree $d$ and rank at least $r$. The main result in this paper
is the following: 
\begin{theorem*} (\ref{BN rank is USC})
The Brill-Noether rank is upper semicontinuous
on {\em $M_g^\text{tr}$\em}, the moduli space of tropical curves
of genus $g$. 
\end{theorem*}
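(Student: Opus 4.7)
The plan is to show that for each integer $\rho$, the set $\{\Gamma \in M_g^{\text{tr}} : w_d^r(\Gamma) \geq \rho\}$ is closed. Unwinding the definition of $w_d^r$, this amounts to proving: whenever $\Gamma_n \to \Gamma_0$ is a convergent sequence in $M_g^{\text{tr}}$ with $w_d^r(\Gamma_n) \geq \rho$ for every $n$, every effective divisor $E_0$ on $\Gamma_0$ of degree $r+\rho$ is contained in some effective divisor of degree $d$ and rank at least $r$. The strategy is \emph{lift, apply, descend}: lift $E_0$ to effective divisors $E_n$ on nearby curves, invoke the hypothesis to produce divisors $D_n \geq E_n$ of degree $d$ with $\mathrm{rank}(D_n) \geq r$, and then take a limit $D_0$ using compactness of the space of effective divisors together with the upper semicontinuity of the rank on divisors established earlier in the paper.

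For the lifting step, note that convergence in $M_g^{\text{tr}}$ is governed by contracting edges of the $\Gamma_n$, so $\Gamma_0$ is topologically a quotient of $\Gamma_n$ (for $n$ large) by the contracting edges; in particular every point of $\Gamma_0$ is a limit of points of $\Gamma_n$, and one can choose an effective divisor $E_n$ of degree $r+\rho$ on $\Gamma_n$ with $E_n \to E_0$ in the natural topology on the symmetric product. By hypothesis there is then an effective $D_n$ on $\Gamma_n$ of degree $d$ with $D_n \geq E_n$ and $\mathrm{rank}(D_n) \geq r$. Passing to a subsequence, compactness of the total space of effective divisors of degree $d$ over a neighborhood of $\Gamma_0$ yields a limit $D_0$, an effective divisor of degree $d$ on $\Gamma_0$. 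Since effectivity of $D_n - E_n$ is a closed condition on the product of symmetric products, $D_0 \geq E_0$. Finally, upper semicontinuity of the rank applied to $D_n \to D_0$ gives $\mathrm{rank}(D_0) \geq r$, and $D_0$ is then the desired effective divisor containing $E_0$.

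The main obstacle lies in making the limit and upper-semicontinuity steps precise across strata boundaries of $M_g^{\text{tr}}$. The parameter space constructed in the body of the paper classifies divisor classes of degree $d$ on tropical curves of a \emph{fixed} combinatorial type, while $\Gamma_0$ typically lives in a smaller stratum than the $\Gamma_n$. One therefore needs a compatible topology on the total space of effective divisors over a neighborhood of $\Gamma_0$ in $M_g^{\text{tr}}$, together with a statement that the rank function remains upper semicontinuous on this total space under edge contractions. This is the point at which the specialization lemma announced in the abstract does the real work: once specialization of divisors under edge contraction is known to be continuous and rank-semicontinuous, the lift–apply–descend scheme above becomes essentially formal, and a short diagonal/subsequence argument then converts closure of the sublevel set into upper semicontinuity of $w_d^r$ on all of $M_g^{\text{tr}}$.
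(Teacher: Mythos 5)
Your lift--apply--descend scheme is, in its core, the same as the paper's proof of Theorem \ref{BN rank is USC}: reduce to the closure of a single cell of $M_g^{\text{tr}}$, choose effective divisors $E_i$ of degree $r+\rho$ on the nearby curves with $(s_i,[E_i])\to(s,[E])$ in $\mbox{Pic}_{r+\rho}(G,w)$, use the hypothesis $w_d^r(\Gamma_{s_i})\geq\rho$ to produce $D_i\geq E_i$ of degree $d$ and rank at least $r$, pass to a convergent subsequence, and conclude from the closedness of the universal Brill-Noether locus that the limit divisor has rank at least $r$ and contains $E$. Up to that point your proposal matches the paper.

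The problem is your final paragraph, where you locate ``the real work.'' First, the universal Picard space $\mbox{Pic}_d(G,w)$ of Section \ref{Section: Universal Jacobian} is not confined to a fixed combinatorial type in the sense you suggest: it is built over the entire closed cone $\sigma=\mathbb{R}_{\geq0}^{E(G)}$, and its fibers over boundary points of $\sigma$ are the Picard groups of the edge-contracted, weighted limit curves. So the ``compatible topology on the total space of effective divisors over a neighbourhood of $\Gamma_0$'' that you say is still needed is already part of the construction, and Theorem \ref{BN-locus is closed-1} is exactly the statement that the rank is upper semicontinuous across these strata; its proof (via Propositions \ref{Arrangement single graph} and \ref{Arrangement multiple graphs}, which control divisors near subcurves collapsing to weighted vertices) is the technical heart of the paper and is far from ``essentially formal.'' Second, the lemma you invoke to close the gap --- ``the specialization lemma announced in the abstract,'' i.e.\ Theorem \ref{Specialization BN} --- cannot do this work: it compares an algebraic curve over a discretely valued field with the dual weighted graph of a semistable model, says nothing about contracting edges within $M_g^{\text{tr}}$ or about continuity of tropical specialization, and is in fact proved after (and using) the semicontinuity results, so relying on it here would be both inapplicable and circular. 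If you replace that appeal by the closedness of $W_d^r(G,w)$ in $\mbox{Pic}_d(G,w)$ --- which your first paragraph already cites correctly --- your argument becomes the paper's proof; as written, the step you flag as decisive rests on the wrong result.
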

Moreover, we extend the Brill-Noether Rank Specialization Lemma (\cite[Theorem 1.7]{LPP})
to the setting of tropical curves:
\begin{theorem*}
(\ref{Specialization BN})
 Let $X$ be a smooth projective curve over
a discretely valued field with a regular semistable model whose special
fiber has weighted dual graph $\Gamma$. Then for every $r,d\in\mathbb{N}$,
$\dim W_{d}^{r}(X)\leq w_{d}^{r}(\Gamma)$.
\end{theorem*}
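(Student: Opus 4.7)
The plan is to extend the metric-graph argument of LPP to the weighted tropical setting by combining the algebraic bound $\dim W_d^r(X)\le w_d^r(X)$ from \cite{LPP} with the weighted specialization lemma of Baker and Amini--Caporaso, which states that the specialization map $\tau:\mathrm{Div}(X)\to\mathrm{Div}(\Gamma)$ carries effective divisors to effective divisors and satisfies $r_\Gamma(\tau(\widetilde{D}))\ge r_X(\widetilde{D})$. Setting $\rho:=\dim W_d^r(X)$, it suffices to show $w_d^r(\Gamma)\ge\rho$, i.e.\ that every effective divisor $E$ on $\Gamma$ of degree $r+\rho$ is contained in some effective divisor on $\Gamma$ of degree $d$ and rank at least $r$.

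Given such an $E$, the argument proceeds in three steps. First, lift $E$ to an effective divisor $\widetilde{E}$ on $X$: after a finite totally ramified base change (which rescales edge lengths of $\Gamma$ uniformly but preserves the weighted combinatorial type) and a subdivision of the semistable model introducing exceptional components at the nodes lying over $\mathrm{supp}(E)$, each point in the support corresponds to a smooth rational point of the special fiber, hence lifts to a section of the regular model. This yields $\widetilde{E}$ with $\deg\widetilde{E}=r+\rho$ and $\tau(\widetilde{E})=E$. Second, invoke the algebraic input: since $w_d^r(X)\ge\dim W_d^r(X)=\rho$, there exists an effective divisor $\widetilde{D}$ on $X$ with $\widetilde{D}\ge\widetilde{E}$, $\deg\widetilde{D}=d$, and $r_X(\widetilde{D})\ge r$. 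Third, specialize back: $D:=\tau(\widetilde{D})$ has degree $d$, and since $\widetilde{D}-\widetilde{E}\ge 0$ and $\tau$ preserves effectivity, $D-E=\tau(\widetilde{D}-\widetilde{E})\ge 0$; Baker's specialization then gives $r_\Gamma(D)\ge r_X(\widetilde{D})\ge r$, so $D$ witnesses that $E$ lies in an element of $W_d^r(\Gamma)$.

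The main obstacle is the lifting step. Support of $E$ in the interiors of edges, or on vertices of positive weight (corresponding to higher-genus components of the special fiber), does not come from rational sections of the original model; one must carefully enlarge the base, refine the model, and verify that the refined weighted metric graph agrees with $\Gamma$ up to a subdivision, under which both the divisor $E$ and the rank function are invariant. Once this technical lifting is in place, the desired inequality follows formally from the specialization lemma and the algebraic bound.
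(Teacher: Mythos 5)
Your outline of the rational-point case matches the paper's argument step for step: lift $E$ to $E^X$, apply \cite[Proposition 5.1]{LPP} to enlarge it to a divisor $D^X$ of degree $d$ and rank at least $r$, and push forward via the weighted specialization map of Amini--Baker to get $D=\tau(D^X)\geq E$ with $r_\Gamma(D)\geq r$. The genuine gap is in your lifting step for an \emph{arbitrary} effective divisor $E$ on $\Gamma$. A finite totally ramified base change of degree $e$ followed by a subdivision of the semistable model only produces new components sitting at points of the skeleton whose distances from the original vertices are rational (multiples of $1/e$ of the edge lengths); consequently the specialization map, even after all finite base changes, only hits divisors supported at points lying at rational distance from the vertices. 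If $\operatorname{supp}(E)$ contains a point at irrational distance along an edge, no refinement of the model makes it correspond to a smooth rational point of a special fiber, so your claim that every $E$ lifts to an effective divisor $\widetilde{E}$ on $X$ with $\tau(\widetilde{E})=E$ is false as stated, and the argument proves the inequality only against rational divisors, which is weaker than the definition of $w_d^r(\Gamma)$.

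The paper closes exactly this gap with a second, non-algebraic step: it first proves the statement for rational divisors (as you do), and then takes an arbitrary $E$, approximates it by a sequence of rational effective divisors $E_i$, chooses $D_i\geq E_i$ of degree $d$ and rank at least $r$, and uses compactness of $\operatorname{Pic}_d(\Gamma)$ together with the closedness of $W_d^r(\Gamma)$ (the upper semicontinuity of the rank established earlier in the paper, Theorem \ref{BN-locus is closed-1}) to pass to a limit divisor $D\geq E$ still of rank at least $r$. Your proposal needs either this limiting argument or some other mechanism for irrational support points; the phrase ``one must carefully enlarge the base, refine the model'' cannot be made to work, because the obstruction is not technical bookkeeping but the fact that specialization never reaches irrational points of the skeleton.
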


The notion of Brill-Noether rank was introduced in \cite{LPP} in
the context of metric graphs and was motivated by the fact that the
function taking a metric graph to the dimension of its Brill-Noether
locus is not upper semicontinuous on the moduli space of metric graphs
of a fixed genus. This is in contrast with the analogous statement
for algebraic curves. However, as shown in Theorem 1.6 of the same
paper, the function taking a metric graph to its Brill-Noether rank
is indeed upper semicontinuous which implies that this notion serves
as a good tropical analogue for the dimension of the Brill-Noether
locus of an algebraic curve. 

The generalization of this upper semicontinuity to the setting of
tropical curves is not just a simple extension of the metric graph
case. The difficulty lies in the fact that, for a converging sequence
of curves in $M_{g}^{\text{{tr}}}$, the limit may be a curve with
an entirely different topology. To overcome this difficulty, we study
the divisor theory of tropical curves as the diameters of certain
subcurves become sufficiently small. See Proposition \ref{Arrangement single graph}
for a precise statement. Proposition \ref{Arrangement multiple graphs}
then shows that every divisor class contains an effective representative
whose restriction to these subcurves has a sufficiently high degree.
It follows that the rank of the divisor does not drop in the limit,
despite the change in topology.

In order to manage sequences of pairs of tropical curves and divisors,
we construct in Section \ref{Section: Universal Jacobian} a universal
Jacobian classifying divisors on all tropical curves of a fixed combinatorial
type, with the property that the fiber over each point is the Jacobian
of the corresponding curve. Within this space we will identify the
universal Brill-Noether locus and show in section \ref{sec:The-universal-Brill-Noether}
that it is a closed set, or equivalently, that the function taking
a divisor class to its rank is upper semicontinuous. We will then
use this result in section \ref{Section: BN rank } to conclude that
the Brill-Noether rank is upper semicontinuous on the moduli space
of tropical curves of genus $g$. 

\section{\label{section: preliminaries}Preliminaries}

In this section we briefly review some of the basics of divisor theory
on tropical curves. We refer the reader to \cite{CaporasoAmini}, \cite{BakerNorine},
\cite{Luo} and \cite[section 2]{LPP} for further details. We also
give a mild generalization of Luo's theory of rank determining sets
from metric graphs to tropical curves.

\subsection{Tropical curves}
\begin{definition}
By a metric graph we mean a pair $(G,\ell)$ such that $G$ is a connected
graph and 
\[
\ell:E(G)\to\mathbb{R}_{>0}
\]
is a function known as the length function. A \emph{tropical curve}
$\Gamma$ is a triple $(G,w,\ell)$ such that $(G,l)$ is a metric
graph and 
\[
w:V(G)\to\mathbb{Z}_{\geq0}
\]
is a function known as the weight function. A \emph{pure tropical
curve }is a tropical curve whose weight function is identically zero.
To emphasize that a curve is not pure we will sometimes call it a
weighted tropical curve.
\end{definition}

\begin{remark}
We implicitly identify a tropical graph $\Gamma$ with the underlying
metric space of the metric graph $(G,\ell)$.
\end{remark}

\subsection{Divisor theory of a tropical curve}\label{prelim}
Let $\Gamma=(G,0,l)$ be a pure tropical curve. Recall that a
\emph{rational function} on $\Gamma$ is a piecewise linear function
with integer slopes. To a rational function $f$, we associate a divisor
$\mbox{div}(f)$ whose value at each point $p$ of $\Gamma$ is the sum
of the incoming slopes of $f$ at $p$. The group of divisors of the form $\text{div}(f)$ is denoted $\text{Prin}(\Gamma)$. The $\emph{rank}$ of a divisor $D$ is
the smallest number $r$ such that for every effective divisor $E$ of
degree $r$, D is linearly equivalent to a divisor containing $E$
(where two divisors are linearly equivalent whenever their difference is
$\mbox{div}(f)$ for some rational function $f$).

Now let $\Gamma=(G,w,\ell)$ be any tropical curve. The pure tropical curve
associated to $\Gamma$ is the curve $\Gamma^{0}=(G,0,\ell)$.
For $\epsilon>0$ we denote $\Gamma_{\epsilon}^{w}=(G^{w},0,l_{\epsilon}^{w})$
the pure tropical curve obtained from $\Gamma^{0}$ by attaching $w(v)$
loops of length $\epsilon>0$ at every vertex $v$ of $\Gamma$. As
shown in \cite[Theorem 5.4]{CaporasoAmini}, for a divisor $D$ supported
on $\Gamma^{0}$, the rank of $D$ as a divisor on the curve $\Gamma_{\epsilon}^{w}$
is independent of the choice of $\epsilon$.
\begin{definition}
\label{weighted rank}(\cite{CaporasoAmini}) Let $\Gamma$ be a tropical
curve.
\begin{enumerate}
\item The divisor group of $\Gamma$ is defined to be the divisor group
of the underlying pure tropical curve, namely $\mbox{Div}(\Gamma):=\mbox{Div}(\Gamma^{0})$.
\item Let $D$ be a divisor on $\Gamma$. The \emph{rank} of $D$, denoted
$r_{\Gamma}(D)$, is its rank as a divisor on $\Gamma_{\epsilon}^{w}$
for any $\epsilon>0$. Since $r_{\Gamma_{\epsilon}^{w}}$ is independent
of $\epsilon$, this is well defined. When the curve $\Gamma$ is
known from context we shall simply write $r(D)$.
\end{enumerate}
\end{definition}

The \emph{genus }of a graph $G$ is its first Betti number $g(G)=|E|-|V|+1$,
and the genus of the tropical curve $\Gamma=(G,w,\ell)$ is 
\[
g(\Gamma)=g(G)+\sum_{v\in V(G)}w(v)\mbox{.}
\]

\subsection{Rank determining sets for tropical curves}\label{RDS}

The theory of rank determining sets, introduced by Y. Luo, shows that
in order to determine the rank of a divisor $D$ on a (non-weighted) metric graph,
one only needs to determine whether or not $D-E$ is linearly equivalent
to an effective divisor for a finite set of divisors $E$. We recall
the basic notions of the theory.
\begin{definition}
(\cite{Luo}) Let $A$ be a subset of a metric graph $\Gamma=(G,\ell)$
and let $D$ be a divisor on $\Gamma$. The $\mbox{A-}\emph{rank}$ of $D$,
denoted $r_{\Gamma,A}(D)$, is the largest number $r$ such that whenever
$E$ is an effective divisor of degree $r$ which is supported on
the set $A$, the divisor $D-E$ is linearly equivalent to an effective
divisor. The set $A$ is said to be a \emph{rank determining set
}if the rank of every divisor $D$ on $\Gamma$ coincides with its
$\mbox{A-rank}$. 
\end{definition}
For instance, if $G$ is a loopless graph then $V(G)$ is a rank determining
set for $\Gamma$ \cite[Theorem 1.6]{Luo}.
In Corollary \ref{prop:weighted RDS}, we extend the notion of rank determining sets to
tropical curves and show the existence of finite rank determining
sets supported on the underlying pure tropical curve. 

Next, we wish to give an equivalent definition of the rank in terms of the underlying pure tropical curve.

Let us first
consider an example:
\begin{example}
\label{Example - Rg} Let $R_{g}$ be the tropical curve consisting
of a single vertex $v$ of weight $g$ and no edges. The pure tropical
curve $(R_{g})_{\epsilon}^{w}$, obtained by attaching $g$ loops
to the vertex $v$, is known as the ``rose with $g$ petals'' (see Figure \ref{Rose}). 

\begin{figure}[H]
\centering 
\includegraphics[scale=0.5]{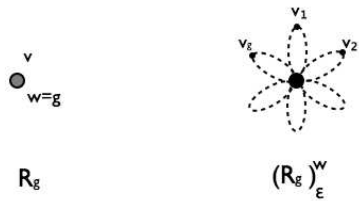}
\caption{The rose with $g$ petals} \label{Rose}
\end{figure}

\noindent Let $D$ be the divisor $d\cdot v$ on $R_{g}$ for some
$d\geq0$. Then 
\[r(D)=\begin{cases}
d-g & d>2g\\
\lfloor\frac{d}{2}\rfloor & d\leq2g
\end{cases}\mbox{.}
\]
In particular, $r(D)\geq r$ if and only if $d\geq r+\min\{r,g\}$.
If $d>2g$ then this follows immediately from the Riemann-Roch Theorem for
tropical curves (\cite[Theorem 5.4]{CaporasoAmini}). The case where
$d<2g$ appears in \cite[Lemma 3.7]{CaporasoAmini} for weighted non-metric
graphs and the proof in general is similar: let $v_{1},\ldots,v_{g}$
be the vertices in the middle of the attached loops of $(R_{g})_{1}^{w}$.
Then $v,v_{1},\ldots,v_{g}$ is a rank determining set for $(R_{g})_{1}^{w}$.
Let $E=b\cdot v+b_1\cdot v_1+\ldots+b_g\cdot v_g$ be an effective divisor of degree $\lfloor\frac{d}{2}\rfloor$ supported on $v,v_{1},\ldots,v_{g}$.
For each $i$, identify the interior of the loop containing $v_{i}$
with the interval $(0,1)$. Now define a rational function $f_i$ as follows
\[
f_{i}=\begin{cases}
x & 0<x\leq\frac{1}{2}\\
1-x & \frac{1}{2}<x<1
\end{cases}
\]
and extend $f_{i}$ to the rest of $(R_{g})_{1}^{w}$ by zero. Then
$D+\sum_{i}b_i\cdot\mbox{div}(f_{i})$ is effective and contains $E$, Hence
$r(D)\geq\lfloor\frac{d}{2}\rfloor$. For the converse, $ $$D$ could
either be a special divisor (i.e. $r>d-g$) or a non-special divisor.
If $D$ is a non-special divisor then $2r=2d-2g<2d-d=d$. Otherwise,
$D$ is special and $r\leq2d$ by Clifford's Theorem (\cite[Theorem 1]{Facchini}).
\end{example}
Roughly speaking, Example \ref{Example - Rg} shows that a single
vertex of weight $w(v)$ places $r+\min\{r,w(v)\}$ conditions on
a divisor of rank $r$. In general, for a divisor $E=b_{1}v_{1}+\cdots+b_{s}v_{s}$
on a tropical curve, we write $E^{*}$ for the divisor 
\[
E^{*}=\sum_{i=1}^{s}(b_{i}+\min\{b_{i},w(v_{i})\})v_{i}\mbox{.}
\]

\begin{remark}
A similar idea already appeared in the proof of the Specialization
Lemma for weighted graphs (\cite[Theorem 4.9]{CaporasoAmini}). To
show that the divisor obtained by specialization has the desired rank
it was first proved that for a fixed vertex $v$, its equivalence
class contains a divisor whose value at $v$ is at least $r+\min\{r,w(v)\}$.
\end{remark}

\begin{proposition}
\label{internalRank}Let $D$ be a divisor on a tropical curve $\Gamma=(G,w,\ell)$. Then $D$ has rank $r$ if and only if for every effective divisor $E$ of degree $r$, the divisor $D-E^*$ is linearly equivalent to an effective divisor. 

Moreover, if $A$ is a rank determining set for $\Gamma_0$ which contains the vertices with non-trivial weights, then it suffices to only consider divisors $E$ that are supported on $A$.\end{proposition}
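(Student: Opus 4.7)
The plan is to translate the definition of rank, which lives on the auxiliary pure tropical curve $\Gamma_\epsilon^w$, into an equivalent condition formulated on $\Gamma^0$, by performing a local analysis at each weighted vertex that mirrors the rose computation of Example \ref{Example - Rg}. The first step is to establish a convenient rank-determining set for $\Gamma_\epsilon^w$: I claim that $B := A \cup \{m_{v,j} : v \in V(G),\ 1 \leq j \leq w(v)\}$, with $m_{v,j}$ the midpoint of the $j$-th loop attached at $v$, is rank-determining for $\Gamma_\epsilon^w$. This follows by subdividing each attached loop into two parallel edges joining $v$ to $m_{v,j}$, which makes the refined graph loopless in Luo's sense, and then invoking \cite[Theorem~1.6]{Luo} together with the hypothesis that $A$ is rank-determining for $\Gamma^0$ and contains all weighted vertices.

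For the direction $(\Leftarrow)$, take any effective divisor $F$ of degree $r$ supported on $B$ and decompose $F = F_A + \sum_{v,j} c_{v,j} m_{v,j}$ with $F_A$ supported on $A$. Setting $E := F_A + \sum_v \bigl(\sum_j c_{v,j}\bigr) v$, the hypothesis yields an effective $G$ with $D - E^* \sim G$. Using the tent functions $f_{v,j}$ from Example \ref{Example - Rg}, satisfying $\mathrm{div}(f_{v,j}) = 2m_{v,j} - 2v$ on $\Gamma_\epsilon^w$, I add suitable positive multiples of $\mathrm{div}(f_{v,j})$ to $G$ to convert mass at $v$ into midpoint mass at $m_{v,j}$; the $\min\{b_v, w(v)\}$ extra copies of $v$ supplied by $E^*$ over $E$ provide exactly the reservoir needed for these moves, producing an effective representative of $D - F$ by essentially the same calculation as in Example \ref{Example - Rg}. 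Conversely, for $(\Rightarrow)$, given $E = \sum_u b_u u$ on $A$, construct the worst-case effective $F$ of degree $r$ on $B$ by placing $\max\{b_v - w(v), 0\}$ units at each weighted $v$ and one unit at each of the first $\min\{b_v, w(v)\}$ midpoints of $v$; this precisely captures the ``fully loaded'' distribution from Example \ref{Example - Rg}. The rank hypothesis produces an effective $G' \sim D - F$ on $\Gamma_\epsilon^w$, and reversing the tent moves collapses the midpoint mass of $G' + (F - E^*)$ onto base vertices, yielding an effective representative of $D - E^*$.

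The main obstacle is verifying that these local tent-function adjustments combine into a globally effective divisor: one must track the coefficients simultaneously at every weighted vertex and its attached loops, ensuring that the $\lceil c_{v,j}/2 \rceil$-type multiplicities chosen at each site leave non-negative residues at both $v$ and $m_{v,j}$. This bookkeeping reduces to the rose calculation of Example \ref{Example - Rg} applied vertex by vertex, which is legitimate because the loops at distinct weighted vertices are disjoint, and it is precisely the pair-move $2m_{v,j} \leftrightarrow 2v$ that produces the factor of $2$ underlying the definition of $E^*$ via $\min\{b_v, w(v)\}$. The ``moreover'' clause is then immediate, since the constructions above only manipulate divisors $E$ supported on $A$.
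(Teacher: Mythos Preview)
Your approach is genuinely different from the paper's, which never touches $\Gamma_\epsilon^w$ directly but instead invokes the Amini--Baker formula
\[
r(D)=\min_{0\le F\le\mathcal W}\bigl(\deg F+r_{\Gamma_0}(D-2F)\bigr)
\]
as a black box and argues purely on $\Gamma^0$. Your $(\Leftarrow)$ direction is essentially sound: the key point is that the number of indices $j$ with $c_{v,j}$ odd is bounded by $\min\{b_v,w(v)\}$, which is exactly the surplus $E^*-E$ provides at $v$, so the $\lceil c_{v,j}/2\rceil$ tent moves leave a non-negative residue at $v$. (The claim that $B=A\cup\{m_{v,j}\}$ is rank-determining for $\Gamma_\epsilon^w$ deserves one more sentence, since $A$ need not contain all of $V(G)$ and Luo's loopless-model theorem alone gives only $V(G)\cup\{m_{v,j}\}$; but this is routine via Luo's criterion on special open sets.)

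The $(\Rightarrow)$ direction, however, has a real gap. After obtaining an effective $G'\sim D-F$ on $\Gamma_\epsilon^w$, you assert that ``reversing the tent moves collapses the midpoint mass of $G'+(F-E^*)$ onto base vertices.'' But applying $\sum_j\mathrm{div}(-f_{v,j})$ to $G'+(F-E^*)$ produces $G'-\sum_{v,j}m_{v,j}$, which is effective only if $G'(m_{v,j})\ge 1$ for every relevant midpoint---and nothing in your construction guarantees this. Concretely: take $\Gamma^0=\{v\}$, $w(v)=2$, $D=3v$ (so $r=1$), $E=v$, hence $F=m_1$. One legitimate effective representative of $D-F=3v-m_1$ is $G'=2p_1$, where $p_1$ is the quarter-point of the first loop (indeed $2p_1\sim v+m_1$ on that loop). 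Your tent reversal then yields $2p_1-m_1$, which is not effective. The desired conclusion $D-E^*=v\ge 0$ is of course correct, but it does not come from your mechanism.

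Repairing this direction requires controlling where the mass of $G'$ sits on the attached loops, and this is exactly the content that the Amini--Baker inequality $r(D)\le\deg E_w+r_{\Gamma_0}(D-2E_w)$ packages. Without it you would need, at minimum, a lemma that a divisor supported on $\Gamma^0$ which becomes equivalent to an effective divisor on $\Gamma_\epsilon^w$ is already equivalent to one on $\Gamma^0$ (provable by flattening the rational function on each loop), together with a separate argument bounding the loop-mass of $G'$ from below; the second of these is where the difficulty lies, and your sketch does not supply it.
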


\begin{proof}
To prove this we will use the following characterization of the rank
of a divisor on a tropical curve: 
\begin{equation}\label{ABrank}
r(D)=\underset{0\leq F\leq\mathcal{W}}{\min}(\deg(F)+r_{\Gamma_{0}}(D-2F)),
\end{equation}
where $\mathcal{W}=\sum_{v\in V(G)}w(v)\cdot v$ (\cite[Corollary 5.5]{AminiBaker}).

Suppose first that $D-E^*$ is linearly equivalent to an effective divisor whenever $E$ is effective of degree $r$ supported on $A$, and let us show that the rank of $D$ is at least $r$. 
By (\ref{ABrank}), we need to show that whenever $F$ is an effective divisor which is contained in $\mathcal{W}$, then $\deg(F)+r_{\Gamma_{0}}(D-2F)\geq r$. Let $F$ be such a divisor. If $\deg(F)>r$, then $\mbox{deg}(F)+r_{\Gamma_{0}}(D-2F)\geq r+1-1=r$.
Therefore we may assume that $\deg(F)\leq r$. Since $0\leq F\leq\mathcal{W}$,
$F^{*}$ is just $2F$. Let $F'$ be any effective divisor of degree
$r-\deg(F)$ supported on $A$. Since $F'+2F\leq(F'+F)^{*}$ and
$\mbox{deg}(F'+F)=r$, together with the fact that $F,F'$ are supported on $A$, we conclude that $D$ is linearly equivalent to an effective divisor that contains $F'+2F$. Hence $D-2F$ is equivalent to a divisor
containing $F'$. Since this is true for every effective divisor $F'$
of degree $r-\deg(F)$ which is supported on a rank determining set for $\Gamma_0$, we see that $r_{\Gamma_{0}}(D-2F)\geq r-\deg F$,
hence $\mbox{deg}(F)+r_{\Gamma_{0}}(D-2F)\geq r$. 

Conversely, suppose that the rank of $D$ is $r$, and let $E$ be an effective divisor of degree $r$ on $\Gamma_0$. Write $E=E_{w}+E_{0}$, where $E_{w}(v)=\min\{E(v),w(v)\}$
at every vertex $v$, zero everywhere else, and $E_{0}=E-E_{w}$. Then $E^{*}=2E_{w}+E_{0}$.
To prove that $D-2E_{w}-E_{0}$ is equivalent to an effective divisor
it suffices to show that its rank on $\Gamma_{0}$ is non-negative.
From the definition of the rank, subtracting an effective divisor
may only decrease the rank by the degree of the subtracted divisor,
hence $r_{\Gamma_{0}}(D-2E_{w}-E_{0})\geq r_{\Gamma_{0}}(D-2E_{w})-\deg(E_{0})$.
The fact that $E_{w}\leq\mathcal{W}$ implies $r\leq\mbox{deg}(E_{w})+r_{\Gamma_{0}}(D-2E_{w})$,
hence $r_{\Gamma_{0}}(D-2E_{w})-\deg(E_{0})\geq r-\mbox{deg}(E_{w})-\mbox{deg}(E_{0})=0$.
This is true for every such $E$, therefore $r_{\Gamma,V(G)}^{*}(D)\geq r$.
\end{proof}

In light of Proposition \ref{internalRank}, we introduce the following definition, which is a slight generalization of the notion of rank determining sets for tropical curves.

\begin{definition}
Let $A$ be a subset of a tropical curve $\Gamma$ and let $D$ be
a divisor on $\Gamma$. The \emph{weighted }$A\mbox{-rank}$ of
$D$, denoted $r_{\Gamma,A}^{*}(D)$ is the largest number $r$ such
that whenever $E$ is an effective divisor of degree $r$ which is
supported on $A$, the divisor $D-E^{*}$ is linearly equivalent (as
a divisor on $\Gamma^{0}$) to an effective divisor. We say that $A$
is a \emph{weighted rank determining set} if the rank of every divisor
$D$ on $\Gamma$ coincides with its weighted $A\mbox{-rank}$.
\end{definition}

An immediate corollary of Proposition \ref{internalRank}, is:

\begin{corollary}
\label{prop:weighted RDS}Let $\Gamma=(G,w,\ell)$ be a tropical curve. Then every rank determining set for $\Gamma_0$ which contains the vertices of positive weight is a weighted rank determining set for $\Gamma$.

In particular, the set of vertices of a tropical curve is a weighted rank determining set.
\end{corollary}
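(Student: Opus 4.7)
The plan is to read this corollary off directly from Proposition \ref{internalRank} by comparing definitions. First I would fix a subset $A \subseteq \Gamma$ that is rank determining for $\Gamma^{0}$ and contains every vertex of positive weight, and let $D$ be an arbitrary divisor on $\Gamma$. The key observation is that the ``moreover'' clause of Proposition \ref{internalRank} already says: for every integer $r'$, we have $r_{\Gamma}(D) \geq r'$ if and only if $D - E^{*}$ is linearly equivalent to an effective divisor for every effective divisor $E$ of degree $r'$ supported on $A$. But this latter condition is, by definition, precisely the statement that $r_{\Gamma,A}^{*}(D) \geq r'$. Applying this equivalence at both $r' = r_{\Gamma}(D)$ and $r' = r_{\Gamma}(D)+1$ then yields $r_{\Gamma,A}^{*}(D) = r_{\Gamma}(D)$, so $A$ is a weighted rank determining set.

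For the ``in particular'' assertion I would take $A = V(G)$, which trivially contains every vertex of positive weight, and reduce to checking that $V(G)$ is rank determining for $\Gamma^{0}$. If $G$ is loopless this is exactly Luo's theorem \cite[Theorem 1.6]{Luo} as recalled above. If $G$ has loops, I would pass to the loopless model obtained by inserting a midpoint vertex in each loop; since divisor classes and ranks depend only on the underlying metric space, Luo's theorem applied to this refined model transfers back to $V(G)$ once the finitely many midpoints are absorbed into the vertex set.

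I do not anticipate any real obstacle here. The statement is essentially a repackaging of Proposition \ref{internalRank} in the language of weighted rank determining sets, the work having already been done in establishing that proposition. The only minor bookkeeping point is accommodating loops of $G$ when invoking Luo's theorem, which is handled by the standard subdivision argument mentioned above.
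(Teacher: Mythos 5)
Your first paragraph is exactly the paper's (unwritten) argument: the corollary is stated as an immediate consequence of Proposition \ref{internalRank}, and reading the ``moreover'' clause at the two thresholds $r'=r_\Gamma(D)$ and $r'=r_\Gamma(D)+1$ is precisely how one gets $r^*_{\Gamma,A}(D)=r_\Gamma(D)$. No issue there.

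The gap is in your treatment of loops for the ``in particular'' clause. Your transfer step goes the wrong way: if you subdivide each loop at its midpoint, Luo's theorem gives that the \emph{enlarged} set $V(G)\cup\{\text{midpoints}\}$ is rank determining, but the rank-determining property is inherited by supersets, not by subsets, so you cannot ``absorb the midpoints'' and conclude anything about $V(G)$ itself. Indeed the claim you are trying to prove fails for a genuine loop: let $\Gamma^0$ be a single loop with one vertex $v$ (weight $0$) and $D=d\cdot v$ with $d\geq 1$. Then $r(D)=d-1$ by Riemann--Roch, but for $A=\{v\}$ every effective $E$ of degree $d$ supported on $A$ is $d\cdot v$ and $D-E\sim 0$, so $r_{\Gamma,A}(D)\geq d>r(D)$; hence $V(G)$ is not rank determining for $\Gamma^0$, and since $w(v)=0$ the weighted $A$-rank coincides with the $A$-rank, so the same discrepancy persists for the weighted notion. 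The ``in particular'' statement should therefore be read with a loopless model in mind (equivalently, with loop midpoints counted among the vertices), which is consistent with how the paper cites Luo's theorem only for loopless graphs and with Example \ref{Example - Rg}, where the rank determining set for the rose explicitly includes the midpoints $v_1,\ldots,v_g$ of the loops. So: keep your first paragraph, invoke Luo's theorem only after passing to a loopless model, and drop the claim that the midpoints can be removed.
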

\qed

\section{\label{Section: Universal Jacobian}The universal Jacobian}

Let $\Gamma$ be a tropical curve of genus $g$. The \emph{Jacobian}
of $\Gamma$ is the group of divisor classes of degree zero on $\Gamma$:

\[
J(\Gamma)=\mbox{Div}^{0}(\Gamma)/\mbox{Prin}(\Gamma)\mbox{.}
\]
The Jacobian is identified with the torus $H_{1}(\Gamma^{0},\mathbb{R})/H_{1}(\Gamma^{0},\mathbb{Z})\simeq\mathbb{R}^{g(\Gamma^{0})}/\mathbb{Z}^{g(\Gamma^{0})}$,
where $g(\Gamma^{0})$ is the genus of $\Gamma^{0}$. See
\cite{BakerFaber}, \cite{MikhalkinZharkov}
for details. In this section, we introduce the universal space of
divisor classes over the space of all tropical curves of a fixed combinatorial
type.
\begin{definition}
Let $\Gamma=(G,w,\ell)$ be a tropical curve. The \emph{combinatorial
type} of $\Gamma$ is $(G,w)$.
\end{definition}
Fix a combinatorial type $(G,w)$. Let $E(G)=\{e_{1},...,e_{n}\}$
and denote $\sigma=\mathbb{R}_{\geq0}^{n}$. We can identify any point
$(s_{1},...,s_{n})$ in the interior of $\sigma$ with the tropical
curve of type $(G,w)$ with $\ell(e_{i})=s_{i}$ which we denote by
$\Gamma_{(s_{1},...,s_{n})}$. Next, we extend this definition to
the boundary of $\mathbb{R}_{\geq0}^{n}$. For a point $s$ on the
boundary of $\sigma$ we construct a tropical curve $\Gamma_{s}=(G_{s},\ell_{s},w_{s})$
as follows. Let $G_{s}$ be the graph obtained from $G$ by contracting
an edge $e_{k}$ whenever $s_{k}=0$. Define the length function by
$\ell(e_{i})=s_{i}$ whenever $s_{i}>0$, and the weight $w_{s}$
at a vertex $v$ of $G_{s}$ by $w_{s}(v)=g(H_{v})$, where $H_{v}$
is the subgraph of $G$ being collapsed to the point $v$ in $G_{s}$.
For instance, by contracting a loop edge, the weight at the base is
increased by one, and contracting an edge between two vertices identifies
both of them to a single vertex whose weight is the sum of the weights
at both ends.

\begin{remark}
Points on the boundary of $\sigma$ correspond to curves of different
combinatorial types than points in the interior. All points in the
interior of a face of $\sigma$ correspond to tropical curves of the
same type. 
\end{remark}
\begin{example}
If $G$ is the graph shown in Figure \ref{Moduli1},

\begin{figure}[H]
\centering
\includegraphics[scale=0.5]{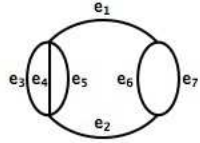}
\caption{The finite graph $G$} \label{Moduli1}
\end{figure}

\noindent and $w$ is the zero function, then the tropical
curves shown in Figure \ref{Moduli2} are all on different faces of $\sigma$.

\begin{figure}[H]
\centering
\includegraphics[scale=0.5]{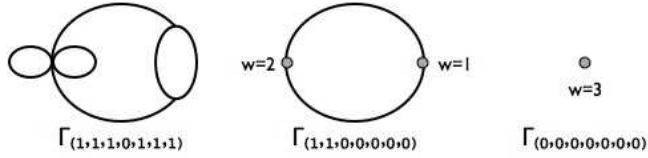}
\caption{Different tropical curves obtained from the graph $G$} \label{Moduli2}
\end{figure}

\end{example}

For every $s=(s_{1},...,s_{n})\in\sigma$ there is a natural rescaling
map $\alpha_{s}:\Gamma_{(1,...,1)}\to\Gamma_{s}$ that induces a map
$H_{1}(\Gamma_{(1,...,1)},\mathbb{Z})\to H_{1}(\Gamma_{s},\mathbb{Z})$
and a surjection of homology groups $H_{1}(\Gamma_{(1,...,1)},\mathbb{R})\twoheadrightarrow H_{1}(\Gamma_{s},\mathbb{R})$,
hence a surjection $\alpha_{s*}:J(\Gamma_{(1,...,1)})\to J(\Gamma_{s})$.
Note that this map is surjective even when edges are contracted. Therefore
we obtain a map 
\[
\alpha:\sigma\times J(\Gamma_{(1,...,1)})\to\underset{s\in\sigma}{\coprod}\{s\}\times J(\Gamma_{s})
\]
 given by 
\[
\alpha(s,D)=(s,\alpha_{s*}(D))
\]

\begin{definition}
Let $(G,w)$ be a combinatorial type of tropical curves.
\end{definition}

\begin{enumerate}
\item The \emph{Universal Jacobian} of the space of tropical
curves of type $(G,w)$ is 
\[
J(G,w)=\underset{s\in\sigma}{\coprod}\{s\}\times J(\Gamma_{s})
\]
endowed with the quotient topology, that is to say the largest topology
making $\alpha$ continuous. 
\item Fix a point $p$ in $\Gamma_{(1,...,1)}$. For every $s$ in $\sigma$,
let $\alpha_{s}(p)$ be its image in $\Gamma_s$ under the natural rescaling
map. By identifying pairs of the form $(s,[D])$ in $\{s\}\times J(\Gamma_{s})$
with pairs of the form $(s,[D-d\cdot\alpha_{s}(p)])$ in $\{s\}\times\mbox{Pic}_{d}(\Gamma_{s})$
we obtain the \emph{Universal Picard Space} 
\[
\mbox{Pic}_{d}(G,w)=\underset{s\in\sigma}{\coprod}\{s\}\times\mbox{Pic}_{d}(\Gamma_{s})\mbox{,}
\]
 with the property that the map $\sigma\times(\Gamma_{(1,\ldots,1)})^{d}\to\mbox{Pic}_{d}(G,w)$,
sending $(s,E)$ to $(s,\alpha_{s*}[E-d\cdot p])$, is continuous.
\item \emph{The universal Brill-Noether locus}, denoted $W_{d}^{r}(G,w)$,
is the subset of $\mbox{Pic}_{d}(G,w)$ consisting of pairs $(s,[D])$
such that the rank of $D$ is at least $r$. 
\end{enumerate}
We can also exhibit the universal Jacobian explicitly as a quotient
of a polyhedral set inside $\mathbb{R}^{2n}$. Let $c_{1},...,c_{g}$
be a basis for the $\mathbb{Z}$-module $H_{1}(\Gamma_{1,...,1},\mathbb{Z})$,
and suppose that $c_{i}=\sum_{j}a_{ij}e_{j}$, where $e_{j}$ are
the edges of $G$ and each $a_{ij}$ is an integer. Notice that for
any choice of $s=(s_{1},...,s_{n})$ in $\mathbb{R}_{\geq0}^{n}$,
the corresponding sums of edges in $\Gamma_{s}$ obtained by the rescaling
map also generate $H_{1}(\Gamma_{s},\mathbb{Z})$. For each $i$,
let $C_{i}$ be the vector of coefficients of $c_{i}$, namely $C_{i}=(a_{i1},a_{i2},...,a_{in})\in\mathbb{R}^{n}$,
and for $(s_{1},...,s_{n})$ in $\sigma$ denote $C_{i}^{s_{1},...,s_{n}}=(s_{1}a_{i1},...,s_{n}a_{in})$.
Define 
\[
\mathcal{H}=\underset{(s_{1},...,s_{n})\in\sigma}{\coprod}\{(s_{1},...,s_{n},\underset{i=1}{\overset{g}{\sum}}t_{i}C_{i}^{s_{1},...,s_{n}})\mid0\leq t_i\leq1)\}\subseteq\sigma\times\mathbb{R}^{n}
\]
(where $t_i$ runs through all the values between $0$ and $1$) and let 
\[
\mathcal{J}=\mathcal{H}/\sim
\]
where $\sim$ is the equivalence relation identifying opposite faces
in the parallelogram above each point in $\sigma$. In other words,
for each $k=1,...,g$, the relation $\sim$ identifies points of the
form 
\[
(s_{1},...,s_{n},\underset{i\neq k}{\overset{}{\sum}}t_{i}C_{i}^{s_{1},...,s_{n}})
\]
 with those of the form 
\[
(s_{1},...,s_{n},\underset{i\neq k}{\overset{}{\sum}}t_{i}C_{i}^{s_{1},...,s_{n}}+C_{k}^{s_{1},...,s_{n}})\mbox{.}
\]
 The restriction of $\mathcal{J}$ to the fiber above any point of
$\sigma$ is naturally isomorphic the Jacobian of the corresponding
tropical curve and we have a natural map 
\[
\psi:\sigma\times J(\Gamma_{(1,...,1)})\to\mathcal{J}
\]
sending 
\[
(s_{1},...,s_{n})\times\overset{g}{\underset{i=1}{\sum}}t_{i}C_{i}^{1,...,1}
\]
to 
\[
\ensuremath{(s_{1},...,s_{n})\times\overset{g}{\underset{i=1}{\sum}}t_{i}C_{i}^{s_{1},...,s_{n}}}\mbox{.}
\]
To show that this definition agrees with the one above, consider the
following commuting diagram:

\[
\xymatrix{ & \sigma\times J(\Gamma_{(1,...,1)})\ar[dl]_{\alpha}\ar[dr]^{\psi}\\
J(G,w)\ar[rr]_{\theta} &  & \mathcal{J}
}
\]
with $\theta$ defined on each fiber $\{s\}\times J(\Gamma_{s})$
of $J(G,w)$ as the one to one and onto map identifying $J(\Gamma_{s})$
with a torus. Let us show that $\theta$ is continuous: suppose $U\subseteq\mathcal{J}$
is an open set. By definition, a set in $J(G,w)$ is open if and only
if its preimage in $\sigma\times J(\Gamma_{(1,...,1)})$ is, hence
$\theta^{-1}(U)$ is open if and only if $\alpha^{-1}(\theta^{-1}(U))$
is. But by the commutativity of the diagram $\alpha^{-1}(\theta^{-1}(U))=\psi^{-1}(U)$
which is open since $\psi$ is continuous. Hence $\theta$ is continuous.
The Jacobian $J(G,w)$ is compact because it is the continuous image
of a compact space and $\mathcal{J}$ is seen from definition to be
Hausdorff. The map $\theta$ is a bijection between a compact and
a Haudorff space, hence a homeomorphism.

\section{\label{sec:The-universal-Brill-Noether}The universal Brill-Noether
locus}

This section is devoted to proving the following proposition:
\begin{theorem}
\label{BN-locus is closed-1}Let $(G,w)$ be a combinatorial type
of tropical curves. Then the universal Brill-Noether locus $W_{d}^{r}(G,w)$
is closed in {\em $\text{Pic}_d(G,w)$\em}. 
\end{theorem}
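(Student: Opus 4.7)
The plan is to verify closedness sequentially. Suppose $(s_n,[D_n]) \to (s_\infty,[D_\infty])$ in $\text{Pic}_d(G,w)$ with $r_{\Gamma_{s_n}}(D_n) \geq r$ for every $n$; I want to conclude $r_{\Gamma_{s_\infty}}(D_\infty) \geq r$. Because each class $[D_n]$ has nonnegative rank it admits an effective representative of degree $d$, so by compactness of $(\Gamma_{(1,\ldots,1)})^d$ I may pass to a subsequence, pick effective lifts $\tilde{D}_n$ on $\Gamma_{(1,\ldots,1)}$ converging to an effective divisor $\tilde{D}$, and push them through the rescaling maps $\alpha_{s_n}$, $\alpha_{s_\infty}$ to obtain effective representatives $D_n \in [D_n]$ on $\Gamma_{s_n}$ and $D_\infty \in [D_\infty]$ on $\Gamma_{s_\infty}$ that converge pointwise in the natural sense.

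By Corollary \ref{prop:weighted RDS}, the vertex set $V(G_{s_\infty})$ is a weighted rank determining set for $\Gamma_{s_\infty}$, so by Proposition \ref{internalRank} it suffices to prove that for every effective $E$ of degree $r$ on $\Gamma_{s_\infty}$ supported on $V(G_{s_\infty})$, the divisor $D_\infty - E^*$ is linearly equivalent on $\Gamma_{s_\infty}^0$ to an effective divisor. Fix such an $E$. For each $v \in V(G_{s_\infty})$ let $H_v \subset G$ be the subgraph contracted to $v$, so that $w_{s_\infty}(v) = g(H_v)$ and $E^*(v) = E(v) + \min\{E(v),g(H_v)\}$.

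To deploy the hypothesis $r_{\Gamma_{s_n}}(D_n) \geq r$, I would lift $E$ back to an effective divisor $E_n$ of degree $r$ on $\Gamma_{s_n}$ by distributing each coefficient $E(v)$ among the vertices of $H_v$ (for $n$ large, only the prescribed edges are short, so the $H_v$ are distinguishable inside $\Gamma_{s_n}$). The subtle point is that a naive distribution fails to give $(E_n)^* \geq E^*$ after specialization, because the starred excess at $v$ arises from the topological genus of $H_v$ rather than from vertex weights in $G$. The remedy, flagged in the introduction, is to invoke Proposition \ref{Arrangement multiple graphs} to replace $D_n$ by a linearly equivalent representative $D_n'$ whose restriction to each $H_v$ has degree at least $\min\{E(v),g(H_v)\}$. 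The hypothesis then applies: $r_{\Gamma_{s_n}}(D_n) \geq r$ forces $D_n' - (E_n)^*$ to be linearly equivalent on $\Gamma_{s_n}^0$ to some effective divisor $F_n$.

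Finally, extract a convergent subsequence of the $F_n$ inside a symmetric product of $\Gamma_{(1,\ldots,1)}$ and push forward to $\Gamma_{s_\infty}^0$; the limit is an effective divisor representing $[D_\infty - E^*]$. Since $E$ was arbitrary, Proposition \ref{internalRank} yields $r_{\Gamma_{s_\infty}}(D_\infty) \geq r$, and $W_d^r(G,w)$ is closed. The main obstacle lies in the third step: coordinating the lift $E_n$ with the arrangement representative $D_n'$ so that the "missing" star mass from the contracted subgraph $H_v$ is supplied not merely in total degree but compatibly with linear equivalence and with the specialization of $D_n'$ to $D_\infty$. This coordination, which has no counterpart in the pure metric graph setting of \cite{LPP}, is exactly what the forward-referenced Propositions \ref{Arrangement single graph} and \ref{Arrangement multiple graphs} are engineered to provide.
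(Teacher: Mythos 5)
Your skeleton (sequential criterion, effective lifts in $(\Gamma_{(1,\ldots,1)})^{d}$ converging by compactness, reduction via Corollary \ref{prop:weighted RDS} and Proposition \ref{internalRank} to showing $D_\infty-E^{*}$ is equivalent to an effective divisor for $E$ supported on the vertices, and the appeal to Proposition \ref{Arrangement multiple graphs}) is the same as the paper's, and you correctly identify the crux: the star excess $\min\{E(v),g(H_v)\}$ at a collapsed vertex comes from the topological genus of $H_v$, which is invisible on the nearby curves $\Gamma_{s_n}$. But the mechanism you propose does not actually produce that excess, so there is a genuine gap in your third and fourth steps. On $\Gamma_{s_n}$ the star operation only sees the weights $w$ of the type $(G,w)$; if, say, the type is pure, then $(E_n)^{*}=E_n$, and the relation you extract from the rank hypothesis is $D_n'-E_n\sim F_n$. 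Passing this to the limit (granting that linear equivalence survives the limit, which is itself only justified at the level of classes in $\mbox{Pic}_d(G,w)$) yields only that $D_\infty-E$ is equivalent to an effective divisor, not $D_\infty-E^{*}$: the pushforward of $\lim (E_n)^{*}$ is $E$ plus whatever $w$-weight contributions exist inside $H_v$, which is in general strictly smaller than $E^{*}$ (e.g.\ when $H_v$ is a cycle of weightless vertices). That weaker conclusion is genuinely insufficient: on the curve $R_1$ with $D=v$, $d=1$, $r=1$, the divisor $D-E$ with $E=v$ is effective, yet the weighted rank of $D$ is $0$. Moreover, the extra degree that Proposition \ref{Arrangement multiple graphs} places on $D_n'$ near each $H_v$ is discarded by your subtraction: the equivalence $D_n'-(E_n)^{*}\sim F_n$ puts no constraint on where $F_n$ sits, so the carefully arranged mass near $H_v$ never reaches the limit relation. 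The "coordination" you defer to the arrangement propositions is exactly the part that remains unproved in your write-up.

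The paper closes this gap by never subtracting on the approximating curves. It applies Proposition \ref{Arrangement multiple graphs} to $D_n$ minus the prescribed points at the non-collapsed vertices, obtaining an equivalent effective representative that contains those points and whose restriction to a shrinking neighbourhood $\Delta_n^{v}$ of each collapsing subcurve $H_v$ has degree at least $E(v)+\min\{E(v),g(H_v)\}$ (note: the full amount, not just $\min\{E(v),g(H_v)\}$ as you state). One then takes the limit of these effective representatives themselves in $(\Gamma_{(1,\ldots,1)})^{d}$: since the $\Delta_n^{v}$ collapse onto $v$, the limit divisor has multiplicity at least $E^{*}(v)$ at each $v$, i.e.\ it contains $E^{*}$ outright, and by continuity of the map $\sigma\times(\Gamma_{(1,\ldots,1)})^{d}\to\mbox{Pic}_d(G,w)$ it represents the class $[D_\infty]$. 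The role of Lemma \ref{Confinement} inside Proposition \ref{Arrangement single graph} is precisely to guarantee that this mass cannot be moved away by linear equivalence before the limit is taken. So the fix is not a better choice of lift $E_n$, but replacing your "subtract and pass the equivalence to the limit" scheme by "arrange the representative, take its limit, and read off the containment of $E^{*}$ directly."
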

\noindent \noindent The proposition is equivalent to the fact that,
whenever a sequence $(s_{i},[D_{i}])$ converges to a pair $(s,[D])$
in $\mbox{Pic}_{d}(G,w)$, and for every $i$ the rank of $D_{i}$
is at least $r$, then the limit divisor class $[D]$ has rank at
least $r$ as well. Suppose that $\Gamma_{s}=(G_{s},w_{s},\ell_{s})$
and the vertices of $G_{s}$ are $v_{1},...,v_{m}$. By the discussion
above on rank determining sets, this is the same as showing that for
every effective divisor $E=b_{1}v_{1}+...+b_{m}v_{m}$ of degree $r$, the divisor
$D$ is equivalent to a divisor containing $b_{j}+\min\{b_{j},w(v_{j})\}$
points at every vertex $v_{j}$. Each vertex $v_{j}$ was obtained,
roughly speaking, by collapsing a subgraph $H_{j}$ of genus $w(v_{j})$
of $G$ and that subgraph is realized as a subcurve $\Lambda_{i}$.
Hence we expect that, when $i$ is large enough, the divisor $D_{i}$
will be linearly equivalent to a divisor whose restriction to a neighbourhood
of each subcurve $\Lambda_{i}$ is of degree at least $b_{j}+\min\{b_{j},w(v_{j})\}$.
This leads us to explore the behavior of divisors when the diameter
of certain subcurves is very small with respect to the rest of the
curve. Figure \ref{Sequence} illustrates a sequence $(\Gamma_{k})$
of pure tropical curves converging to a weighted tropical curve $\Gamma$.
The subcurves $\Lambda_{1},\Lambda_{2},\Lambda_{3}$ collapse in the
limit to the weighted vertices $v_{1},v_{2},v_{3}$. 
\begin{figure}[H]
\centering
\includegraphics[scale=0.35]{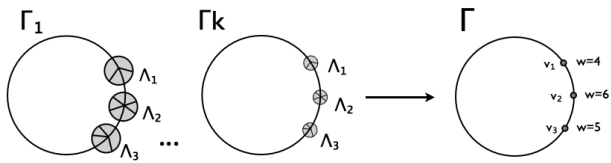}
\caption{A squence of tropical curves converging in $\text{Pic}_d(G,w)$} \label{Sequence}
\end{figure}

See also \cite{GathmannKerber} for a different example, where the limiting behavior of families of
metric graphs was studied in order to extend the Riemann-Roch Theorem to the metric graph
case.  

\begin{lemma}
\label{Chan}Let $D$ and $E$ be two effective linearly equivalent
divisors on a tropical curve $\Gamma$, and let $f$ be a piecewise
linear function such that $D-E=\mbox{div}(f)$. Then at every point
$x$ where $f$ is differentiable we have $|f'(x)|\leq\deg(D)$. 
\end{lemma}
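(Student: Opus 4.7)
The plan is to apply a discrete divergence identity to a suitable superlevel set of $f$. Replacing $f$ with $-f$ if necessary (note $\mbox{div}(-f)=E-D$, which swaps the roles of $D$ and $E$ without changing $\deg(D)=\deg(E)$), we may assume $s:=f'(x)>0$, and it suffices to show $s\leq\deg(D)$.

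The main tool is the identity
\[
\sum_{p\in U}\mbox{div}(f)(p)\;=\;\sum_{q\in\partial U}s^{U}_{q}(f),
\]
valid for any open set $U\subseteq\Gamma$ whose boundary consists of finitely many points, each lying in the interior of an edge of $\Gamma$ and of a linear piece of $f$; here $s^{U}_{q}(f)$ denotes the slope of $f$ at the boundary point $q$ in the unique direction pointing into $U$. This identity is essentially a telescoping computation: an edge entirely contained in $U$ contributes opposite incoming slopes $\pm m$ at its two endpoints, which cancel, while an edge cut by $\partial U$ at a point $q$ contributes only the incoming slope at its endpoint inside $U$, and this slope is exactly $s^{U}_{q}(f)$.

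Now pick $c$ slightly greater than $f(x)$, generic so that the level set $\{f=c\}$ avoids all vertices of $\Gamma$ and all breakpoints of $f$ (only finitely many critical values are excluded), and let $T$ be the connected component of $\{y\in\Gamma:f(y)>c\}$ whose closure meets $x$ from the side along which $f$ is increasing. Since $f$ is linear of slope $s$ in a neighbourhood of $x$ on this side, the corresponding boundary point $q_{0}\in\partial T$ lies in that same linear piece and $s^{T}_{q_{0}}(f)=s$. At every other $q_{i}\in\partial T$ the slope $s^{T}_{q_{i}}(f)$ is strictly positive (as $f=c$ on $\partial T$ and $f>c$ inside $T$) and integer-valued, hence $\geq 1$. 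Applying the divergence identity to $U=T$ gives
\[
\sum_{p\in T}\mbox{div}(f)(p)\;\geq\;s.
\]
On the other hand, effectiveness of $E$ yields $\mbox{div}(f)=D-E\leq D$ pointwise, so the left-hand side is bounded above by $\deg(D|_{T})\leq\deg(D)$. This gives $s\leq\deg(D)$, as required.

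The main thing to be careful about is the choice of $c$: one needs $q_{0}$ to lie in the linear region of $f$ containing $x$ and every $q_{i}$ to lie in an edge interior, so that the divergence identity applies cleanly and the slope out of $q_{0}$ into $T$ really equals $s$. Both conditions follow by restricting $c$ to lie in a small interval just above $f(x)$ avoiding the finite set of critical values of $f$.
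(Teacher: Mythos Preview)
Your proof is correct and follows essentially the same idea as the paper's: both bound the slope by summing $\mbox{div}(f)$ over a region on which $f$ exceeds its value near $x$, using effectiveness of $E$ to bound this sum by $\deg(D)$. The only difference is packaging---the paper takes the union of all nondecreasing paths emanating from the point and telescopes explicitly, while you invoke the divergence identity on a generic superlevel component; the underlying computation is the same.
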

\noindent A similar claim for the case where $\mbox{deg}(D)=2$ appeared
in the second part of \cite[Lemma 3.1]{ChanHyperellipric} and the
proof of the general case is almost identical. We include the details
for completeness.
\begin{proof}
Write $F=\mbox{div}(f)=D-E=x_{1}+\cdots+x_{d}-y_{1}-\cdots-y_{d}$.
Pick a point $z$ in $\Gamma$. Let $s^{+}(z)$ and $s^{-}(z)$ denote
the sum of the positive and negative incoming slopes of $f$ at $z$
respectively. We will show that $s^{+}(z)\leq d$. An analogous argument
shows that $s^{-}(z)\geq-d$, so the lemma will follow. Let $U$ be
the union of all the paths in $\Gamma$ eminating from $z$ along
which $f$ is nondecreasing. Let $z_{1},\ldots,z_{n}$ be the set
of points in $U$ that are either vertices of $\Gamma$ or are points
at which $f$ is non-diff{}erentiable. Let $W=\{z,z_{1},\ldots,z_{n}\}$.
Then $U\backslash W$ consists of fi{}nitely many open segments. Let
$S=\{s_{1},\ldots,s_{k}\}$ be the set of closures of these segments.
So $s_{1},\ldots,s_{k}$ (which could be either closed intervals or loops) cover U and intersect
at points in $W$, and $f$ is linear along each of them. Orient each
segment in $S$ for reference, and for each point $y$ in $W$ let

\[
\delta^{+}(y)=\{j\mbox{ }|\mbox{ }s_{j}\mbox{ is incoming at \ensuremath{y}}\}
\]

\[
\delta^{-}(y)=\{j\mbox{ }|\mbox{ \ensuremath{s_{j}\mbox{ is outgoing at \ensuremath{y\}}}}}
\]
Finally, for each $i=1,\ldots,k$, let $m_{i}$ be the slope of $f$
along the oriented segment $s_{i}$. Note that the slope of $f$ along
any edge $e$ leaving $U$ must be negative, because otherwise $e$
would lie in $U$. Therefore we have

\[
F(z_{i})\geq\sum_{j\in\delta^{+}(z_{i})}m_{j}-\sum_{j\in\delta^{-}(z_{i})}m_{j}
\]

\[
F(z)\geq\sum_{j\in\delta^{+}(z)}m_{j}-\sum_{j\in\delta^{-}(z)}m_{j}+s^{+}(z)
\]
for $i=1,\ldots,n$. Summing over all $i$, we have $F(z)+F(z_{1})+\dots+F(z_{l})\geq s^{+}(z)$.
Since $F=x_{1}+\dots+x_{d}-y_{1}-\dots-y_{d}$ it must be that $F(z)+F(z_{1})+\dots+F(z_{n})$
and hence $s^{+}(z)$ is at most $d$.\end{proof}
\begin{definition}
Let $\Gamma=(G,w,\ell)$ be a tropical curve. A \emph{tropical subcurve}
$\Lambda$ is a triple $(H,w',\ell')$ such that $H$ is a connected subgraph
of $G$, $w'$ is the restriction of $w$ to the vertices of $H$
and $\ell'$ is the restriction of $\ell$ to the edges of $H$. For
a tropical subcurve $\Lambda$, we write $N_{\delta}(\Lambda)$ for
the set of points in $\Gamma$ whose distance from $\Lambda$ is at
most $\delta$.
\end{definition}
We now fix notation for the remainder of the section. Let $\Gamma$
be a tropical curve such that $\Gamma=(G,w,\ell)$ and let $\Lambda$
be a tropical subcurve of diameter $\epsilon>0$ with $\Lambda=(H,w',\ell')$.
$H$ is assumed to be a loopless graph with $m$ vertices. $D$ will
always denote an effective divisor of degree $d$ and rank at least $r$ on
$\Gamma$. 

\begin{proposition}
\label{Pushin single divisor}Let $E$ be an effective divisor of
degree $r$ supported on $\Lambda$. Then there is a closed neighbourhood
$\Lambda'$ of $\Lambda$ which is contained in $N_{d\epsilon}(\Lambda)$
and an effective divisor $D'$ such that $D$ and $D'$ coincide on
$\Lambda$, $D'$ is linearly equivalent to $D$ (as a divisor on
$\Gamma$), and $D'|_{\Lambda'}-E^{*}$ is equivalent (as a divisor
on $\Lambda'$) to an effective divisor.
\end{proposition}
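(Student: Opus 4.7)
The plan is to produce an effective $D_1 \sim D$ dominating $E^*$ via Proposition~\ref{internalRank}, and then to correct $D_1$ by a chip-firing localised in a closed neighbourhood $\Lambda' \subseteq N_{d\epsilon}(\Lambda)$ of $\Lambda$ so as to match $D$ on $\Lambda$ itself.

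Since $D$ has rank at least $r$ and $E$ is an effective divisor of degree $r$, Proposition~\ref{internalRank} yields that $D - E^*$ is linearly equivalent to an effective divisor, hence there is an effective $D_1 \sim D$ with $D_1 \geq E^*$. Choose a rational function $f$ with $\text{div}(f) = D_1 - D$. Because $D$ and $D_1$ are both effective, linearly equivalent, and of degree $d$, Lemma~\ref{Chan} gives $|f'(x)| \leq d$ at every point of differentiability.

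Set $\Lambda' = N_{d\epsilon}(\Lambda)$. The heart of the argument is the construction of a rational function $\psi$ on $\Gamma$ with three properties: (i) $\psi$ is locally constant on $\Gamma \setminus \Lambda'$; (ii) $\text{div}(f + \psi)|_\Lambda = 0$; and (iii) $D' := D_1 + \text{div}(\psi)$ is effective. Granted such a $\psi$, one checks directly that $D'$ meets all the requirements: it is effective by (iii) and linearly equivalent to $D_1 \sim D$ by construction; on $\Lambda$ one computes $D'|_\Lambda = D_1|_\Lambda + \text{div}(\psi)|_\Lambda = D_1|_\Lambda - \text{div}(f)|_\Lambda = D|_\Lambda$ by (ii); and (i) ensures that $\psi|_{\Lambda'}$ is a rational function on $\Lambda'$ whose divisor on $\Lambda'$ coincides with $\text{div}(\psi)|_{\Lambda'}$, because the slope contributions from edges leaving $\Lambda'$ vanish. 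Hence $D'|_{\Lambda'} \sim D_1|_{\Lambda'}$ as divisors on $\Lambda'$, and $D_1|_{\Lambda'} - E^*$ is effective since $D_1 \geq E^*$ and $E^*$ is supported on $\Lambda \subseteq \Lambda'$.

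The principal obstacle is the construction of $\psi$. The idea exploits the fact that $\text{diam}(\Lambda) = \epsilon$ and $|f'| \leq d$, so the oscillation of $f$ on $\Lambda$ is at most $d\epsilon$ while the collar $\Lambda' \setminus \Lambda$ has width exactly $d\epsilon$, leaving room to damp with integer slopes bounded by $d$. Concretely, one takes $\psi = -f + c$ on an open neighbourhood of $\Lambda$ (which gives (ii) immediately, as $f + \psi$ is locally constant there), and then interpolates PL with integer slopes of absolute value at most $d$ across the collar until $\psi$ matches a chosen constant on each connected component of $\Gamma \setminus \Lambda'$, yielding (i). The delicate point is verifying effectivity (iii) in the collar: chips introduced or removed by $\text{div}(\psi)$ must land at positions already occupied by $D_1$, which is forced by the slope bounds on $f$ together with an analysis of the corners introduced when damping $\psi$.
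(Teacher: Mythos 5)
Your outer reduction is fine: if a rational function $h=f+\psi$ exists with $\mathrm{div}(h)=0$ on a neighbourhood of $\Lambda$, with $h-f$ locally constant off a closed neighbourhood $\Lambda'\subseteq N_{d\epsilon}(\Lambda)$, and with $D+\mathrm{div}(h)\geq 0$, then the proposition follows exactly as you say. But the entire content of the proposition is the construction of such an $h$, and that is the step you leave unproved -- and, for the specific recipe you propose ($\psi=-f+c$ near $\Lambda$, then damping across the collar), it is actually false. Effectivity of $D'=D+\mathrm{div}(f+\psi)$ requires a chip of $D$ (not merely of $D_1$) at every point where $\mathrm{div}(f+\psi)<0$, i.e.\ at every corner where the slope of $f+\psi$ increases in the outward direction. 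On a branch leaving $\Lambda$ along which $f$ has positive outward slope and $D$ has no chips, your $h$ must pass from slope $0$ (near $\Lambda$) to the slope of $f$ (outside $\Lambda'$, by your condition (i)), so such a corner is unavoidable inside the chip-free collar. Concretely: let $\Gamma=[0,3]$, $D=3\cdot\{0\}$ (so $d=3$), let $f$ have slope $3$ on $[0,1]$ and slope $1$ on $[1,3]$, and take $E=\{1\}$, $r=1$, $\Lambda=[1,1+\epsilon]$ (pure curve, so $E^{*}=E$); then $D+\mathrm{div}(f)=2\cdot\{1\}+\{3\}\geq E^{*}$, but on $(1+\epsilon,1+4\epsilon]$ the slope of $f+\psi$ must jump from $0$ to $1$ at a point carrying no chip of $D$, so $D'$ cannot be effective. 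Thus the ``delicate point'' you defer at the end is not a routine verification; your conditions (i)--(iii) as stated are jointly unachievable in general.

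The paper avoids this by giving up your condition (i) rather than fighting for it. It truncates $f$ at the level $\mu=\inf\{f(x)\mid x\in\partial N_{\epsilon}(\Lambda)\}$, setting $\bar f=\mu$ on $N_{\epsilon}(\Lambda)$ and $\bar f=\min\{f,\mu\}$ outside; because one only ever takes a minimum with a constant, all corners of $\bar f$ have the harmless sign, so $D'=D+\mathrm{div}(\bar f)$ is effective for free, and it agrees with $D$ on $\Lambda$. The price is that $\bar f-f$ is not locally constant far from $\Lambda$ (it equals $\mu-f$ wherever $f>\mu$, possibly far away), so the equivalence on $\Lambda'$ cannot be read off as in your third paragraph; instead the paper takes $\Lambda'$ to be $\Lambda$ together with the descending paths on which $f>\mu$ (of length at most $d\epsilon$ by Lemma \ref{Chan} -- which is where that lemma is really needed, not for bounding the slopes of an interpolant) and proves $D'|_{\Lambda'}+\mathrm{div}\bigl((f-\bar f)|_{\Lambda'}\bigr)\geq (D+\mathrm{div}(f))|_{\Lambda'}\geq E^{*}$ by a case analysis at the boundary points of $\Lambda'$. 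If you want to keep your cleaner formulation, you must at least weaken ``$f+\psi$ constant near $\Lambda$'' to ``$\mathrm{div}(f+\psi)=0$ on $\Lambda$'' (this rescues the example above) and then actually prove existence of such an $h$, with slopes matching those of $f$ on every component of $\Gamma\setminus\Lambda'$ and consistency around cycles and at branch points; that existence statement is essentially the content of the paper's proof and is missing from yours.
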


\begin{proof}
Since $r(D)\geq r$, Proposition \ref{internalRank} implies that there is a piecewise linear function $f$ on
$\Gamma$ satisfying $E^{*}\leq D+\mbox{div}(f)$. Denote $\mu=\inf\{{f(x)\mid x\in\partial N_{\epsilon}(\Lambda)}\}$,
and let 
\[
\bar{f}=\begin{cases}
\mu & x\in N_{\epsilon}(\Lambda)\\
\min\{f,\mu\} & x\notin N_{\epsilon}(\Lambda)
\end{cases}\mbox{.}
\]

Define $D'=D+\mbox{div}(\bar{f})$, (see Subsection \ref{prelim} for the definition of $\mbox{div}(\bar{f})$). $D'$ identifies with $D$ on
$\Lambda$ since $\bar{f}$ is constant on a neighbourhood of $\Lambda$,
and one easily checks that $D'$ is effective. Let $\Lambda'$ be
the closure of the union of $\Lambda$ with the paths
emanating from it along which $f$ is decreasing and strictly
larger than $\mu$. By Lemma \ref{Chan}, the absolute value of the
slope of $f$ is at most $d$, hence 
\[
\sup\{f(x)-f(y)\mid x,y\in \Lambda\}\leq d\cdot\epsilon\mbox{,}
\]
 implying that these paths cannot be longer than $d\epsilon$ and
$\Lambda'$ is contained in $N_{d\epsilon}(\Lambda)$. It remains
to show that the restriction of $D'$ to $\Lambda'$ is linearly equivalent
(as a divisor on $\Lambda'$) to an effective divisor containing $E^{*}$.
Denote $g=(f-\bar{f})|_{\Lambda'}$ and let $D''$ be the divisor
$D''=D'|_{\Lambda'}+\mbox{div}(g)$ on $\Lambda'$. $D''$ is equivalent
on $\Lambda'$ to $D'|_{\Lambda'}$ and the claim is that it is effective
and contains $E^{*}$. Since this is true for $(D+\mbox{div}(f))|_{\Lambda'}$,
it suffices to check that $D''\geq(D+\mbox{div}(f))|_{\Lambda'}$. 

Let $y$ be any point of $\Lambda'$. If $y$ is in the interior of
$\Lambda'$ then $g$ identifies with $(f-\bar{f})$ on a neighbourhood
of $y$, hence $\mbox{div}(g)(y)=\mbox{div}(f-\bar{f})(y)$ and $D''(y)=(D+\mbox{div}(f))(y)$.
If $y\in\partial\Lambda'$ and $f(y)>\mu$ then on each path emanating
from $\Lambda'$ at $y$, $f$ is non-decreasing (otherwise, by the construction
of $\Lambda'$ that path would have been included in $\Lambda'$).
Hence $\mbox{div}(g)(y)\geq\mbox{div}(f-\bar{f})(y)$ and $D''(y)\geq(D+\mbox{div}(f))(y)$.
Finally, suppose that $y\in\partial\Lambda'$ and $f(y)=\mu$. Let
$e$ be a segment leading to $y$ from outside $\Lambda'$ along which
$f$ has constant slope. If the incoming slope of $f$ is nonnegative
then $\bar{f}$ identifies with $f$ on $e$ and $(f-\bar{f})=0$
on $e$. If the incoming slope of $f$ is negative then the incoming
slope of $f-\bar{f}$ is negative as well. In either case, $\mbox{div}(g)(y)\geq\mbox{div}(f-\bar{f})(y)$
and $D''(y)\geq D+(\mbox{div}(f))(y)$.\end{proof}

\begin{proposition}
\label{Pushing} Suppose that the\textup{ closed neighbourhood $\mbox{ }N_{\epsilon\cdot (3d)^{d-r+1}}(\Lambda)$} deformation retracts
onto $\Lambda$. Then there is a neighbourhood $\widetilde{\Lambda}$
of $\Lambda$ which is contained in $N_{\epsilon\cdot (3d)^{d-r+1}}(\Lambda)$
and an effective divisor $\widetilde{D}$ such that:
\begin{enumerate}
\item $D$ is equivalent to $\widetilde{D}$ as a divisor on $\Gamma$.
\item $D$ and $\widetilde{D}$ coincide on $\Lambda$.
\item The rank of $\widetilde{D}$ as a divisor on $\widetilde{\Lambda}$
is at least $r$. \end{enumerate}
\end{proposition}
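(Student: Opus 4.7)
The plan is to iterate Proposition~\ref{Pushin single divisor}, handling one effective divisor $E$ at a time while using the deformation-retract hypothesis to preserve the properties established at earlier steps.

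By Proposition~\ref{internalRank} together with Corollary~\ref{prop:weighted RDS}, verifying that $\widetilde{D}|_{\widetilde{\Lambda}}$ has rank at least $r$ on the tropical curve $\widetilde{\Lambda}$ reduces to showing that $\widetilde{D}|_{\widetilde{\Lambda}}-E^{*}$ is linearly equivalent on $\widetilde{\Lambda}$ to an effective divisor, for every effective divisor $E$ of degree $r$ supported on a weighted rank-determining set of $\widetilde{\Lambda}$. Since $H$ is loopless and the deformation-retract hypothesis forces $\widetilde{\Lambda}$ to extend $\Lambda$ only by trees, the vertex set of $\widetilde{\Lambda}$ (together with its weighted vertices) is such a set, and the divisors to be checked form a finite list $E_{1},\dots,E_{N}$.

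I would build $(\widetilde{D},\widetilde{\Lambda})$ by recursion. Setting $(D^{(0)},\Lambda^{(0)})=(D,\Lambda)$, at step $k$ I apply Proposition~\ref{Pushin single divisor} to the triple $(D^{(k-1)},\Lambda^{(k-1)},E_{k})$. This yields $D^{(k)}\sim D$ on $\Gamma$ (agreeing with $D^{(k-1)}$ on $\Lambda^{(k-1)}$) and a subcurve $\Lambda^{(k)}\subseteq N_{d\cdot\operatorname{diam}(\Lambda^{(k-1)})}(\Lambda^{(k-1)})$ on which $D^{(k)}|_{\Lambda^{(k)}}-E_{k}^{*}$ is equivalent to an effective divisor. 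Since $\operatorname{diam}(\Lambda^{(k)})\leq(1+2d)\operatorname{diam}(\Lambda^{(k-1)})\leq 3d\cdot\operatorname{diam}(\Lambda^{(k-1)})$, the diameter inflates by a factor at most $3d$ per step.

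The key technical point is that the property established for $E_{j}$ with $j<k$ survives step $k$. On $\Lambda^{(k-1)}$ the divisors $D^{(k)}$ and $D^{(k-1)}$ agree, so induction provides a rational function $g_{j}$ on $\Lambda^{(k-1)}$ with $D^{(k)}|_{\Lambda^{(k-1)}}-E_{j}^{*}-\operatorname{div}_{\Lambda^{(k-1)}}(g_{j})\geq 0$. The hypothesis that $N_{\epsilon(3d)^{d-r+1}}(\Lambda)$ retracts onto $\Lambda$ forces $\Lambda^{(k)}$ and $\Lambda^{(k-1)}$ to share the first Betti number of $\Lambda$, so $\Lambda^{(k)}\setminus\Lambda^{(k-1)}$ is a disjoint union of trees attached to $\Lambda^{(k-1)}$. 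Extending $g_{j}$ to a function $\widetilde{g}_{j}$ on $\Lambda^{(k)}$ by constant values along each tree produces a rational function whose divisor coincides with $\operatorname{div}_{\Lambda^{(k-1)}}(g_{j})$ on $\Lambda^{(k-1)}$ and vanishes off it; combined with the effectivity of $D^{(k)}$ on the appended trees, this yields the desired equivalence on $\Lambda^{(k)}$.

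The main obstacle is the counting. I must arrange the iteration so that at most $d-r+1$ steps are ever required, matching the neighbourhood bound $\epsilon(3d)^{d-r+1}$; a naive enumeration over every effective $E$ of degree $r$ on vertices gives vastly too many. The sharp bound presumably comes from organizing the recursion around a discrete invariant of the pair $(D^{(k)},\Lambda^{(k)})$ --- such as the deficiency of $D^{(k)}|_{\Lambda^{(k)}}$ relative to the conditions imposed by the weighted vertices in $\Lambda^{(k)}$ --- an invariant controlled by $d-r$ that strictly drops at each genuinely new step until the full rank condition is achieved. Making this counting precise is the part I expect to require the most care.
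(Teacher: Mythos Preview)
Your approach matches the paper's: iterate Proposition~\ref{Pushin single divisor} over a finite list of test divisors supported on a fixed rank-determining set, and use the tree structure of the successive enlargements to carry forward the conditions already achieved. Two points deserve comment.

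First, a minor circularity: you take the test divisors $E_1,\dots,E_N$ on the vertices of $\widetilde{\Lambda}$, but $\widetilde{\Lambda}$ is the output of the construction. The paper instead fixes them once and for all on $V(\Lambda)$; since the whole neighbourhood $N_{\epsilon(3d)^{d-r+1}}(\Lambda)$ deformation retracts onto $\Lambda$ and carries no weighted vertices outside $\Lambda$, the set $V(\Lambda)$ is a weighted rank-determining set for every intermediate $\Lambda^{(k)}$ and for the final $\widetilde{\Lambda}$. This removes the circularity and keeps the list of $E_k$'s fixed throughout.

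Second, the counting you leave open is resolved exactly by the kind of invariant you guess at, namely $\deg\bigl(D^{(k)}|_{\Lambda^{(k)}}\bigr)$. After the first application this degree is at least $r$ (since $D^{(1)}|_{\Lambda^{(1)}}-E_1^{*}$ is equivalent to an effective divisor and $\deg E_1^{*}\geq r$), it never exceeds $d$, and it is non-decreasing along the iteration because $D^{(k)}$ agrees with $D^{(k-1)}$ on $\Lambda^{(k-1)}\subseteq\Lambda^{(k)}$ and is effective. The paper's key observation is that if step $k$ does \emph{not} strictly increase this degree, then $D^{(k-1)}|_{\Lambda^{(k-1)}}$ was already equivalent on $\Lambda^{(k-1)}$ to an effective divisor containing $E_k^{*}$, so one may take $(\Lambda^{(k)},D^{(k)})=(\Lambda^{(k-1)},D^{(k-1)})$ with no growth of diameter. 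Hence at most $d-r$ of the steps after the first genuinely enlarge the neighbourhood, giving at most $d-r+1$ factors of $3d$ and the stated bound $\epsilon\,(3d)^{d-r+1}$.
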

\begin{proof}
Recall that by Corollary \ref{prop:weighted RDS}, the vertices
of $\Lambda$ are a rank determining set for $\Lambda$. Since $N_{\epsilon\cdot (3d)^{d-r+1}}(\Lambda)$
deformation retracts onto $\Lambda$ and does not contain any weighted
vertices outside $\Lambda$, they are a rank
determining set for $N_{\epsilon\cdot (3d)^{d-r+1}}(\Lambda)$ as well.
Let $S_{1},\ldots,S_{M}$ be the different effective divisors of degree
$r$ supported on the vertices of $\Lambda$. 
By Proposition (\ref{Pushin single divisor}) with $\Lambda$ and
$S_{1}$ we obtain a curve $\Lambda_{1}$ of diameter $\epsilon_{1}\leq3d\cdot\epsilon$
and a divisor $D_{1}$, such that $D_{1}$ is linearly equivalent
to $D$, identifies with it on $\Lambda$ and $D_{1}|_{\Lambda_{1}}-S_{1}^{*}$
is linearly equivalent, as a divisor on $\Lambda_{1}$, to an effective
divisor. Note that the degree of $D_{1}|_{\Lambda_{1}}$ is at least $r$. 

We now repeat the process successively for $S_{2},...,S_{M}$ as follows. For $2\leq i\leq M$, we use Proposition (\ref{Pushin single divisor}) again to obtain a subcurve $\Lambda_i$ and a divisor $D_i$, such that the restriction
of $D_{i}$ to $\Lambda_{i-1}$ identifies with $D_{i-1}$, and for every $j=1,\ldots,i$, $D_{i}|_{\Lambda_{i}}-S_{j}^{*}$
is equivalent, as a divisor on $\Lambda_{i}$, to an effective divisor.

If we denote $\epsilon_i$ the diameter of $\Lambda_i$, then by construction, $\epsilon_i < 3d\cdot\epsilon_{i-1}$. Now, if $\rm{deg}(D_i|_{\Lambda_i})$ is the same as $\rm{deg}(D_{i-1}|_{\Lambda_i})$, then $D_{i-1}$ is already equivalent (as a divisor on $\Lambda_{i-1}$) to a divisor that contains $S_i$. Therefore, we may choose $\Lambda_{i}$ to be the same subcurve as $\Lambda_{i-1}$, and in this case, $\epsilon_i=\epsilon_{i-1}$. In particular, since the total degree of $D$ is $d$, there can be at most $d-r+1$ stages where the degree of the restriction increases, and so $\epsilon_M < \epsilon\cdot(3d)^{d-r+1}$.

Now choose $\widetilde{D}=D_{M}$ and $\widetilde{\Lambda}=\Lambda_{M}$.
By construction, $\widetilde{D}$ is linearly equivalent to $D$, and for every subset
$S$ of degree $r$ of the rank determining set, $\widetilde{D}|_{\widetilde{\Lambda}}$
is equivalent on $\widetilde{\Lambda}$ to a divisor containing $S^{*}$.
It follows that $r_{\widetilde{\Lambda}}(\widetilde{D}|_{\widetilde{\Lambda}})\geq r$. \end{proof}
\begin{definition}
Let $\Lambda$ be a subcurve of a tropical curve $\Gamma$ and let
$r\in\mathbb{Z}_{\geq0}$. Then we write $r^{\Lambda}=r+\min\{r,g(\Lambda)\}$.
\end{definition}
\begin{corollary}
\label{Pushin cor}In the same conditions and notations as Proposition
\ref{Pushing}, $D$ is linearly equivalent to an effective divisor whose restriction
to $\widetilde{\Lambda}$ has degree at least $r^{\Lambda}$. Moreover,
this divisor coincides with $D$ on $\Lambda$.
\end{corollary}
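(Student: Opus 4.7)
The plan is to take the divisor $\widetilde{D}$ produced by Proposition \ref{Pushing} and show that the rank bound $r_{\widetilde{\Lambda}}(\widetilde{D}|_{\widetilde{\Lambda}})\geq r$ already forces the restriction to $\widetilde{\Lambda}$ to carry degree at least $r^{\Lambda}$. No further modification is needed: the desired effective divisor is $\widetilde{D}$ itself, and properties (1) and (2) of Proposition \ref{Pushing} directly supply the linear equivalence to $D$ on $\Gamma$ and the coincidence with $D$ on $\Lambda$.

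The key technical input is the preservation of genus when thickening. Because $\widetilde{\Lambda}\subseteq N_{\epsilon\cdot(3d)^{d-r+1}}(\Lambda)$ and the latter deformation retracts onto $\Lambda$ without acquiring any weighted vertices outside $\Lambda$, the same holds for $\widetilde{\Lambda}$. Hence $\widetilde{\Lambda}$ and $\Lambda$ share both the first Betti number of their underlying graphs and the total vertex weight, so $g(\widetilde{\Lambda})=g(\Lambda)$. Once this is in hand, a Clifford--Riemann-Roch dichotomy on $\widetilde{\Lambda}$ finishes the job. Set $F=\widetilde{D}|_{\widetilde{\Lambda}}$, which is effective with $r_{\widetilde{\Lambda}}(F)\geq r$. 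If $F$ is non-special then the tropical Riemann-Roch theorem \cite[Theorem 5.4]{CaporasoAmini} yields $\deg(F)=r_{\widetilde{\Lambda}}(F)+g(\widetilde{\Lambda})\geq r+g(\widetilde{\Lambda})$, while if $F$ is special then Clifford's theorem \cite[Theorem 1]{Facchini} gives $\deg(F)\geq 2r_{\widetilde{\Lambda}}(F)\geq 2r$. Since $\min\{r,g(\widetilde{\Lambda})\}\leq r$ in the special case and $\min\{r,g(\widetilde{\Lambda})\}\leq g(\widetilde{\Lambda})$ in the non-special case, we obtain in either case $\deg(F)\geq r+\min\{r,g(\widetilde{\Lambda})\}=r+\min\{r,g(\Lambda)\}=r^{\Lambda}$.

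The main obstacle has already been absorbed into Proposition \ref{Pushing}; what remains is a short degree estimate from the standard dichotomy between special and non-special divisors. The only subtle bookkeeping is the verification that $\widetilde{\Lambda}$ inherits the full genus of $\Lambda$ after the iterated thickening, which follows immediately from the deformation-retract hypothesis built into Proposition \ref{Pushing}.
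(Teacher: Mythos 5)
Your proof is correct and follows essentially the same route as the paper: take $\widetilde{D}$ from Proposition \ref{Pushing} and bound $\deg(\widetilde{D}|_{\widetilde{\Lambda}})$ from below via the tropical Riemann--Roch theorem in the non-special case and Facchini's Clifford theorem in the special case. The only differences are cosmetic --- you organize the dichotomy by speciality rather than by whether $\deg$ exceeds $2g(\widetilde{\Lambda})$, and you spell out the comparison of $g(\widetilde{\Lambda})$ with $g(\Lambda)$ that the paper leaves implicit (in fact only $g(\widetilde{\Lambda})\geq g(\Lambda)$, automatic from $\Lambda\subseteq\widetilde{\Lambda}$, is needed).
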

\begin{proof}
Let $\widetilde{D}$ be the divisor linearly equivalent to $D$ obtained
in Prop (\ref{Pushing}). Then $\widetilde{D}$ identifies with $D$
on $\Lambda$ and $r_{\widetilde{\Lambda}}(\widetilde{D}|_{\widetilde{\Lambda}})\geq r$.
Let $E$ be the restriction of $\widetilde{D}$ to $\widetilde{\Lambda}$,
denote $e=\deg E$ and let $h$ be the genus of $\widetilde{\Lambda}$.
We must show that $e\geq r^{\Lambda}$.

If $e\geq2h$ then by the Riemann-Roch theorem for tropical curves
(\cite[Theorem 5.4]{CaporasoAmini}), $r_{\tilde{\Lambda}}(E)=e-h$, hence $e=r_{\tilde{\Lambda}}(E)+h\geq r+ h \geq r+\min\{r,h\}$. 

Now suppose $0\leq e<2h$.
$E$ could either be a special (i.e. $r_{\tilde{\Lambda}}(E)>e-h$) or a non-special divisor,
and we claim that in any case $2\cdot r_{\tilde{\Lambda}}(E)\leq e$: if $E$ is special this is exactly
Clifford's theorem (\cite[Theorem 1]{Facchini}). Otherwise,
$E$ is non-special, namely $r_{\tilde{\Lambda}}(E)=e-h$. Assume by contradiction that
$e<2\cdot r_{\tilde{\Lambda}}(E)=2e-2h$. But then $2h<e$, a contradiction. In
any case, $2r\leq 2\cdot r_{\tilde{\Lambda}}(E)\leq e$. Together with the fact that $e<2h$, it follows
that $r<h$, hence $e\geq2r=r+\min\{r,h\}=r^{\Lambda}$. 

\end{proof}
Suppose that instead of a single subcurve we are given two subcurves
$\Lambda_{1}$ and $\Lambda_{2}$ and two integers $r_{1}$ and $r_{2}$
such that $r_{1}+r_{2}=r$. One could be tempted to use Corollary
\ref{Pushin cor} twice to show that $D$ is equivalent to an effective
divisor whose restriction to both $\widetilde{\Lambda}_{1}$ and $\widetilde{\Lambda}_{2}$
has degree at least $r_{1}^{\Lambda_{1}}$ and $r_{1}^{\Lambda_{2}}$
respectively. However, when using the corollary a second time in an
attempt to move points of the divisor into $\widetilde{\Lambda}_{2}$,
there is no guarantee, a priori, that enough of its points will remain
inside $\widetilde{\Lambda}_{1}$. The following lemma shows that
every subcurve contains special configurations of points which are
``stuck'' inside the subcurve. That is to say, whenever the restriction
of an effective divisor to the subcurve is exactly that configuration,
it cannot be moved to contain fewer points in the subcurve.
\begin{lemma}
\label{Confinement}Let $h$ be the genus of the underlying metric
space $\Lambda^{0}$ and let $0\leq k\leq h$. Then there exists a
divisor $v_{1}+\cdots+v_{k}$ of degree $k$ supported on $\Lambda$
such that whenever $E$ and $E'$ are two linearly equivalent effective
divisors, and the restriction of $E$ to $\Lambda$ is exactly $v_{1}+\cdots+v_{k}$,
then the degree of the restriction of $E'$ to $\Lambda$ is at least
$k$. \end{lemma}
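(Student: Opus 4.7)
The plan is to choose $v_1,\ldots,v_k$ generically in $\Lambda$ and exploit the fact that on a tropical curve of genus $h$ a generic divisor class of degree strictly less than $h$ contains no effective representative. Let $p_1,\ldots,p_s$ denote the finitely many points where $\Lambda$ meets the closure of $\Gamma\setminus\Lambda$.

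First I would translate the conclusion into a statement about divisor classes on $\Lambda^{0}$. Given two effective linearly equivalent divisors $E,E'$ on $\Gamma$ with $E'=E+\mbox{div}(f)$, splitting the incoming slopes of $f$ at each boundary point into those coming from inside $\Lambda$ and those coming from outside yields, on $\Lambda^{0}$,
\begin{equation*}
E'|_\Lambda \;=\; E|_\Lambda + \mbox{div}_{\Lambda^{0}}(f|_\Lambda) + \sum_{j=1}^{s}\sigma_{j}\, p_{j},
\end{equation*}
where $\sigma_{j}\in\mathbb{Z}$ is the total incoming slope of $f$ at $p_{j}$ from outside $\Lambda$. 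The middle term has degree $0$, so $\deg(E'|_\Lambda)=k+\sum_{j}\sigma_{j}$; if this is less than $k$, then $v_1+\cdots+v_k+\sum_{j}\sigma_{j} p_{j}$ is linearly equivalent on $\Lambda^{0}$ to the effective divisor $E'|_\Lambda$. It therefore suffices to choose $v_1+\cdots+v_k$ so that for every integer vector $(a_1,\ldots,a_s)$ with $\sum_{j} a_j<0$, the class of $v_1+\cdots+v_k+\sum_{j} a_j p_{j}$ does not lie in $W^{0}_{k+\sum a_j}(\Lambda^{0})$, the locus of effective divisor classes.

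Next I would carry out a genericity argument in the Jacobian. For any $m$ with $0\le m<h$, the locus $W^{0}_{m}(\Lambda^{0})$ is the image of the Abel--Jacobi map $\mbox{Sym}^{m}(\Lambda^{0})\to\mbox{Pic}_{m}(\Lambda^{0})$, hence has dimension at most $m$. Since $k\le h$ and $\sum a_j<0$, the degree $k+\sum a_j$ is at most $k-1<h$, so each translate
\[
\bigl[\textstyle-\sum_{j} a_j p_{j}\bigr]+W^{0}_{k+\sum a_j}(\Lambda^{0})
\]
is a compact subset of $\mbox{Pic}_{k}(\Lambda^{0})\simeq\mathbb{R}^{h}/\mathbb{Z}^{h}$ of dimension at most $k-1$, and hence of Lebesgue measure zero. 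The full bad set is the union of these translates over the finitely many values $\sum a_j\in\{-k,\ldots,-1\}$ and over the countably many integer vectors $(a_j)$ with each prescribed sum, so it is a countable union of measure-zero sets and still has measure zero in $\mbox{Pic}_{k}(\Lambda^{0})$. The Abel--Jacobi map $\Lambda^{k}\to\mbox{Pic}_{k}(\Lambda^{0})$ is piecewise linear, and for $k\le h$ its image $W^{0}_{k}(\Lambda^{0})$ has full dimension $k$ while the map is generically finite onto its image; hence the preimage of a measure-zero set is a measure-zero subset of $\Lambda^{k}$, and any $(v_1,\ldots,v_k)$ outside this preimage satisfies the conclusion.

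The main subtlety is having to rule out infinitely many bad shift vectors $(a_1,\ldots,a_s)$ at once, which forces the argument into the Jacobian, where the bad locus is a countable union of proper subtori rather than a single set we can avoid divisor by divisor. The decisive input is the hypothesis $k\le h$: it guarantees that every degree $k+\sum a_j$ relevant to the bad set is strictly less than $h$, so that $W^{0}_{k+\sum a_j}$ is a proper subvariety of $\mbox{Pic}_{k+\sum a_j}(\Lambda^{0})$ and the measure-theoretic genericity argument applies.
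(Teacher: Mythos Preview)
Your reduction to divisor classes on $\Lambda^{0}$ and your identification of the bad locus as a countable union of translates $[-\sum_j a_j p_j]+W^{0}_{k+\sum a_j}(\Lambda^{0})$ is exactly the core of the paper's argument. The paper, however, runs it only for $k=h$: there $\mbox{Pic}_{h}(\Lambda^{0})$ is the full $h$-torus, so a countable union of positive-codimension polyhedral sets is a proper subset and one simply picks a class in the complement. For $k<h$ the paper then observes that if $v_1+\cdots+v_h$ works for the case $k=h$, then $v_1+\cdots+v_k$ automatically works for $k$, since adding $v_{k+1}+\cdots+v_h$ to both $E$ and $E'$ reduces to the first case.

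Your direct treatment of general $k$ has a gap in the measure step. You note that each bad translate has dimension at most $k-1$ and ``hence Lebesgue measure zero'' in $\mbox{Pic}_{k}(\Lambda^{0})\simeq\mathbb{R}^{h}/\mathbb{Z}^{h}$, and then infer that its Abel--Jacobi preimage in $\Lambda^{k}$ has measure zero. But for $k<h$ the entire image $W^{0}_{k}(\Lambda^{0})$ already has $h$-dimensional Lebesgue measure zero, so this deduction is vacuous: being measure zero in the ambient torus does not prevent the bad set from covering $W^{0}_{k}$. What is actually needed is to track dimension rather than ambient measure: each bad translate is a polyhedral set of dimension at most $k-1$, and on any cell of $\Lambda^{k}$ on which the piecewise-linear Abel--Jacobi map has rank $k$ (such cells exist---take the $k$ factors to lie on edges belonging to $k$ independent cycles of $\Lambda^{0}$), the preimage of such a set again has dimension at most $k-1$, so a countable union cannot fill the cell. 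With that correction your argument goes through; alternatively, the paper's reduction to $k=h$ avoids the issue entirely.
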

\begin{proof}
First assume that $k=h$. We will show that the set of classes of
effective divisors on $\Lambda$ that do not satisfy the lemma cannot
be the entire $h\mbox{-dimensional}$ torus $\mbox{Pic}_{h}(\Lambda)$.
Suppose that $E_{\Lambda}$ is such a divisor. Namely, there is an
effective divisor $E$ that restricts to $E_{\Lambda}$ on $\Lambda$,
and $E$ is linearly equivalent to an effective divisor $E'$ whose
restriction to $\Lambda$ has degree less than $h$. Write 
\[
E=E'+\mbox{div}(f)
\]
for some piecewise linear function $f$. The divisors $\mbox{div}(f)|_{\Lambda}$
and $\mbox{div}(f|_{\Lambda})$ coincide on the interior of $\Lambda$
hence differ by a combination of its boundary points. Therefore, if
we denote the boundary points of $\Lambda$ by $y_{1},...,y_{\ell}$,
then 
\[
E|_{\Lambda}=E'|_{\Lambda}+\mbox{div}(f|_{\Lambda})+a_{1}\cdot y_{1}+\cdots+a_{\ell}\cdot y_{\ell}
\]
for some integers $a_{1},...,a_{\ell}$. Hence the divisor $E|_{\Lambda}$
is linearly equivalent, as a divisor on $\Lambda$, to $E'|_{\Lambda}+a_{1}\cdot y_{1}+\cdots a_{\ell}\cdot y_{\ell}$.
For a fixed $a_{1}y_{1}+\cdots+a_{\ell}y_{\ell}$, every choice of
$E|_{\Lambda}$ corresponds to a choice of $E'|_{\Lambda}$ which
is a divisor of degree strictly smaller than $h$. It follows that
the set of all such $[E|_{\Lambda}]$ is the translation by $ $$a_{1}y_{1}+\cdots+a_{\ell}y_{\ell}$
of the set of effective divisors of a fixed degree smaller than $h$.
Recall that this set has positive codimension inside $\mbox{Pic}_{h}(\Lambda)$
(\cite[Proposition 3.6]{LPP}). As we let $a_{1}y_{1}+\cdots+a_{\ell}y_{\ell}$
vary over all the divisors supported on the boundary we see that the
set of classes $[E_{\Lambda}]$ that do not satisfy the lemma is a
countable union of sets of positive codimension, and hence is a proper
subset of $\mbox{Pic}_{h}(\Lambda)$. 

Now, let $E$ be an effective divisor whose restriction to $\Lambda$
is $v_{1}+...+v_{k}$ for some $k<h$ and let $E'$ be an effective
divisor such that $E'=E+\mbox{div}(f)$ for some piecewise linear
function $f$. Then $E'=(E+v_{k+1}+...+v_{h}+\mbox{div}(f))-v_{k+1}-...-v_{h}$.
By the first part, $\deg((E+v_{k+1}+...+v_{h}+\mbox{div}(f))|_{\Lambda})\geq h$.
Therefore $\deg(E'|_{\Lambda})\geq h-(h-k)=k$. 
\end{proof}
Since the lemma above requires the restriction of the divisor to $\Lambda$
to be exactly $v_{1}+\cdots+v_{k}$, we shall require a tool to remove
superfluous points away from the subcurve. This is given by the following
lemma:
\begin{lemma}
\label{Diluting}Let $E$ be an effective divisor on $\Gamma$. Suppose
that the restriction of $E$ to $\Lambda$ has degree greater than
$k$ but $E$ is equivalent to an effective divisor $F$ whose restriction
to $\Lambda$ has degree smaller than $k$. Then there exists an arbitrarily
small neighbourhood $\Lambda'$ of $\Lambda$ and an effective divisor
$E'$ linearly equivalent to $E$ such that its restriction to $\Lambda'$
has degree exactly $k$. 
\end{lemma}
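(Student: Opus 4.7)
My plan is to interpolate continuously between $E$ and $F$ through a one-parameter family of equivalent effective divisors and halt the deformation at a carefully chosen moment, then capture the right number of chips in a small enlargement of $\Lambda$.

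First I would write $E - F = \mbox{div}(f)$ for some piecewise linear function $f$ on $\Gamma$, and for $t$ ranging over $[\min f, \max f]$ set
\[
D_t = F + \mbox{div}\!\bigl(\min(f, t) - \min f\bigr).
\]
A standard computation shows each $D_t$ is effective and linearly equivalent to $E$, with $D_{\min f} = F$ and $D_{\max f} = E$; between finitely many critical values of $t$ the chips of $D_t$ slide along edges at speeds bounded by $d = \deg E$ (by Lemma \ref{Chan}). The integer-valued function $\phi(t) = \deg(D_t|_\Lambda)$ starts below $k$ and ends above $k$, so there is a smallest critical value $t^*$ at which $\phi$ transitions from some $b < k$ to some $a \geq k$. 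If $a = k$ I would take $E' = D_{t^*}$ together with any small enlargement $\Lambda'$ avoiding the newly arrived chips.

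If instead $a > k$, then $j := a - b$ chips cross $\partial\Lambda$ into $\Lambda$ at time $t^*$. Just before $t^*$, these chips lie outside $\Lambda$ at points $q_1,\ldots,q_j$ within distance $d\epsilon$ of $\partial\Lambda$. I would then take $\Lambda'$ to be $\Lambda$ together with short segments extending along the edges to exactly $k - b$ of these chips, and set $E' = D_{t^*-\epsilon}$, so that $\deg(E'|_{\Lambda'}) = b + (k - b) = k$. Taking $\epsilon$ arbitrarily small makes $\Lambda'$ arbitrarily close to $\Lambda$.

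The main obstacle is that several of the $q_i$ may coincide at a common point with multiplicity greater than one, so that no choice of fingers can pick out exactly $k - b$ of them. To deal with this, I would precede the argument by a local chip-firing move at each such coincidence point: a rational function supported in an arbitrarily small neighbourhood whose divisor redistributes the multiplicity among several distinct nearby positions (which one builds by firing along a suitable subset of edges leaving the point). After finitely many such perturbations the chips become individually resolvable, and the preceding construction then produces the required $E'$ and $\Lambda'$.
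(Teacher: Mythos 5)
Your proposal is correct and is essentially the paper's argument in different dress: both truncate $f$ at the level where the outflow across $\partial\Lambda$ first brings the count on $\Lambda$ to (or past) $k$, and both enlarge $\Lambda$ by short segments along the escape edges so that exactly the right chips are captured, with the neighbourhood shrinking as the truncation parameter does. The only real divergence is in how the integer overshoot at the critical level is absorbed --- the paper replaces the slope of $f$ on part of the critical escape segment by a smaller integer slope $c$, whereas you split the multiple level-set chip by local firings and then select a prefix of the resulting simple chips with a finger; both devices work.
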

\begin{proof}
Let $f$ be the piecewise linear function satisfying $E+\mbox{div}(f)=F$.
Let $\mbox{outdeg}_{\Lambda}(f)$ denote the sum of all the slopes
of $f$ emanating from $\Lambda$. Then \linebreak{}
$\mbox{outdeg}_{\Lambda}(f)=\deg(E|_{\Lambda})-\deg(F|_{\Lambda})>0$.
For each path eminating from $\Lambda$, choose a segment homeomorphic
to an interval along which $f$ has a constant slope and contains
no points of $E$ except possibly at their intersection with $\Lambda$.
Let $\{e_{1},...,e_{l}\}$ be the sets of those paths. Note that each
$e_{i}$ can be chosen arbitrarily small. For each $e_{i}$, let $p_{i}$
be its endpoint touching $\Lambda$ and $q_{i}$ the other endpoint,
and denote $c_{i}$ the outgoing slope of $f$ on $e_{i}$. Rearrange
the segments so that $i_{2}>i_{1}$ whenever $f(p_{i_{2}})>f(p_{i_{1}})$
or $f(p_{i_{2}})=f(p_{i_{1}})$ and $f(q_{i_{2}})>f(q_{i_{1}})$.
Let $\alpha$ be the first index satisfying

\[
\deg(E|_{\Lambda})-\overset{\alpha}{\underset{i=1}{\sum}}c_{i}<k\mbox{.}
\]

Let $\Lambda'$ be the union of $\Lambda$ with the segments $e_{\alpha+1},...,e_{l}$.
Denote $c=\deg(E|_{\Lambda})-\overset{\alpha-1}{(\underset{i=1}{\sum}}c_{i}+k)>0$
and let $M=f(p_{\alpha})+\epsilon\cdot c$ for some positive number
$\epsilon$ smaller than the length of $e_{\alpha}$. Now define

\[
\bar{f}(x)=\begin{cases}
f(p_{\alpha})+c\cdot x & x\in e_{\alpha},0\leq x\leq\epsilon\\
\max(f(x),M) & \mbox{otherwise}
\end{cases}\mbox{,}
\]
and let $E':=E+\mbox{div}(\bar{f})$. Then 
\[
\mbox{outdeg}_{\Lambda'}(\bar{f})=\overset{}{\overset{\alpha-1}{\underset{i=1}{\sum}}c_{i}+c=\overset{\alpha-1}{\underset{i=1}{\sum}}}c_{i}+\deg(E|_{\Lambda})-(k+\overset{\alpha-1}{\underset{i=1}{\sum}}c_{i})=\deg(E|_{\Lambda})-k\mbox{,}
\]
 hence $\deg(E'|_{\Lambda'})=\deg(E|_{\Lambda})-\mbox{outdeg}_{\Lambda'}(\bar{f})=k$
as required. 
\end{proof}
The next proposition is a stronger version of Corollary \ref{Pushin cor}.
We use it in Proposition \ref{Arrangement multiple graphs} to generalize
the result of the corollary to multiple tropical subcurves. 
\begin{proposition}
\label{Arrangement single graph}Suppose that the neighbourhood $ $$\mbox{ }N_{\epsilon\cdot (3d)^{d-r+1}}(\Lambda)$
deformation retracts onto $\Lambda$ and does not contain any weighted
vertices outside $\Lambda$. Then there is an effective divisor $\widetilde{D}$
which is linearly equivalent to $D$, a neighbourhood $\widetilde{\Lambda}$
of $\Lambda$ which is contained in $N_{\epsilon\cdot (3d)^{d-r+1}}(\Lambda)$,
and an effective divisor $U$ of degree $r$ supported on $\widetilde{\Lambda}$,
such that:
\begin{enumerate}
\item $\tilde{D}$ contains $U$.
\item Any divisor that contains $U$ and is linearly equivalent to $\widetilde{D}$,
restricts to a divisor of degree at least $r^{\Lambda}$ on $\tilde{\Lambda}$. 
\end{enumerate}
\end{proposition}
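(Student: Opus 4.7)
Set $k=\min\{r,g(\Lambda)\}$, so that $r^{\Lambda}=r+k$. The plan is to arrange matters so that $\widetilde{D}|_{\widetilde{\Lambda}}=U+W$, where $U$ is an effective divisor of degree $r$ and $W$ is a ``confined'' configuration of degree $k$ furnished by Lemma \ref{Confinement}; the confinement of $W$ is then precisely what yields condition (2).

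First, invoke Proposition \ref{Pushing} to obtain a neighborhood $\widetilde{\Lambda}_{0}\subseteq N_{\epsilon\cdot(3d)^{d-r+1}}(\Lambda)$ and an effective divisor $D_{0}\sim D$ with $D_{0}=D$ on $\Lambda$ and $r_{\widetilde{\Lambda}_{0}}(D_{0}|_{\widetilde{\Lambda}_{0}})\geq r$. Corollary \ref{Pushin cor} then yields $\deg(D_{0}|_{\widetilde{\Lambda}_{0}})\geq r+k$. Next, apply Lemma \ref{Confinement} to $\widetilde{\Lambda}_{0}$ to extract a divisor $W=w_{1}+\cdots+w_{k}$ such that every effective divisor $E$ whose restriction to $\widetilde{\Lambda}_{0}$ equals $W$ exactly has $\deg(E'|_{\widetilde{\Lambda}_{0}})\geq k$ for every effective $E'\sim E$.

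Since $r_{\widetilde{\Lambda}_{0}}(D_{0}|_{\widetilde{\Lambda}_{0}})\geq r\geq k$, there is an effective divisor on $\widetilde{\Lambda}_{0}$ linearly equivalent to $D_{0}|_{\widetilde{\Lambda}_{0}}$ and containing $W$. Realize the associated piecewise linear function on all of $\Gamma$ by extending it via the boundary construction of Proposition \ref{Pushin single divisor} (replacing it with a suitable constant outside a neighborhood of $\widetilde{\Lambda}_{0}$), producing $D_{1}\sim D$ with $D_{1}|_{\widetilde{\Lambda}_{0}}\geq W$. Write $D_{1}|_{\widetilde{\Lambda}_{0}}=W+V$ with $\deg(V)\geq r$. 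If $\deg(V)>r$, apply Lemma \ref{Diluting} to push $\deg(V)-r$ of the points of $V$ just outside $\widetilde{\Lambda}_{0}$, enlarging the neighborhood to some $\widetilde{\Lambda}\subseteq N_{\epsilon\cdot(3d)^{d-r+1}}(\Lambda)$, without disturbing $W$. The outcome is $\widetilde{D}\sim D$ with $\widetilde{D}|_{\widetilde{\Lambda}}=U+W$, where $U$ is an effective divisor of degree exactly $r$ supported on $\widetilde{\Lambda}$, yielding condition (1).

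For condition (2): any $\widetilde{D}'\sim\widetilde{D}$ with $\widetilde{D}'\geq U$ gives that $\widetilde{D}'-U$ is effective and linearly equivalent to $\widetilde{D}-U$, whose restriction to $\widetilde{\Lambda}$ equals $W$ exactly. By Lemma \ref{Confinement}, $\deg((\widetilde{D}'-U)|_{\widetilde{\Lambda}})\geq k$, and so $\deg(\widetilde{D}'|_{\widetilde{\Lambda}})\geq r+k=r^{\Lambda}$. The main obstacle lies in the middle step: one must guarantee that the ``internal'' equivalence on $\widetilde{\Lambda}_{0}$ can be effected by a piecewise linear function on $\Gamma$ without inadvertently perturbing the degree estimates at the boundary, and that the subsequent dilution removes the excess of $V$ without moving any of the $W$-points, so that the final restriction is exactly $U+W$.
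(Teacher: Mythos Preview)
Your overall strategy---combine the confined configuration from Lemma \ref{Confinement} with Corollary \ref{Pushin cor} and then trim via Lemma \ref{Diluting}---is exactly the paper's. The gap you flag at the end is real, and the paper resolves it by reversing your order of operations in one crucial respect: the confined divisor (call it $V$, of degree $s=\min\{r,h\}$) is chosen \emph{on $\Lambda$ itself, before} invoking Proposition \ref{Pushing}. Because $r_\Gamma(D)\geq r\geq s$, one can first replace $D$ by a linearly equivalent effective divisor containing $V$ using the \emph{global} rank on $\Gamma$; then Proposition \ref{Pushing} and Corollary \ref{Pushin cor} produce $D'\sim D$ which \emph{coincides with $D$ on $\Lambda$}, hence still contains $V$, and has $\deg(D'|_{\Lambda'})\geq r^\Lambda$. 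This completely sidesteps the problem of promoting an internal $\widetilde{\Lambda}_0$-equivalence to a global one.

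Your dilution step also needs two repairs that the paper makes. First, Lemma \ref{Diluting} has a hypothesis you do not verify: you need an effective divisor equivalent to your candidate whose restriction has degree \emph{below} the target. The paper handles this by a case split: either $D'-V$ is equivalent to some effective divisor with degree $<r$ on $\Lambda'$ (and then Diluting applies), or every effective divisor equivalent to $D'-V$ already has degree $\geq r$ on $\Lambda'$, in which case one takes $\widetilde{D}=D'$, $\widetilde{\Lambda}=\Lambda'$, and $U$ any degree-$r$ effective subdivisor of $D'|_{\Lambda'}$ containing $V$; condition (2) then holds immediately because any $E\sim D'$ with $E\geq U\geq V$ has $E-V$ effective and $\sim D'-V$, so $\deg((E-V)|_{\Lambda'})\geq r$ and $\deg(E|_{\Lambda'})\geq r+s$. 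Second, to guarantee the confined points are not disturbed, Diluting is applied to $D'-V$, not to $D'$; one then sets $\widetilde{D}=D''+V$, so $\widetilde{D}|_{\widetilde{\Lambda}}=U+V$ with $U=D''|_{\widetilde{\Lambda}}$ of degree exactly $r$, and $(\widetilde{D}-U)|_\Lambda=V$ exactly, which is what Lemma \ref{Confinement} needs.
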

\begin{proof}
Let $s=\min\{r,h\}$ where $h$ is the genus of $\Lambda^{0}$. Let
$V$ be the effective divisor of degree $s$ supported on $\Lambda$
that was obtained in Lemma \ref{Confinement}. Since $r(D)\geq r$,
we can replace $D$ with a linearly equivalent effective divisor that
contains $V$. By Corollary \ref{Pushin cor}, there is a divisor
$D'$ which is equivalent to $D$, identifies with it on $\Lambda$,
and satisfies $\deg(D'|_{\Lambda'})\geq r^{\Lambda}$ for some neighbourhood
$\Lambda'$ of $\Lambda$ in $N_{\epsilon\cdot (3d)^{d-r+1}}(\Lambda)$.
In particular, $D'$ contains $V$, and $D'-V$ has degree at least $r$ on $\Lambda'$. 

Suppose that $D'-V$ is equivalent to some divisor whose restriction
to $\Lambda'$ has degree less than $r$. Then by Lemma \ref{Diluting},
$D'-V$ is equivalent to an effective divisor $D''$ whose restriction
to an arbitrarily small neighbourhood $\tilde{\Lambda}$ of $\Lambda'$
has degree exactly $r$. Choose $\tilde{\Lambda}$ small enough so
that it is still contained in $N_{\epsilon\cdot (3d)^{d-r+1}}(\Lambda)$.
Let $U$ be the restriction of $D''$ to $\tilde{\Lambda}$. Take
$\tilde{D}=D''+V$. Then $\tilde{D}$ is equivalent to $D'$ which
is equivalent to $D$. Notice that the restriction of $\widetilde{D}$
to $\widetilde{\Lambda}$ is exactly $V+U$, hence the restriction
of $\widetilde{D}-U$ is exactly $V$. Now, let $E$ be any other
effective divisor that contains $U$ and is linearly equivalent to
$\widetilde{D}$. Then $E-U$ is effective and linearly equivalent
to $\widetilde{D}-U$, so by the choice of $V$, the restriction of
$E-U$ to $\widetilde{\Lambda}$ has degree at least $s=r^{\Lambda}-r$.
Therefore, $\deg(E|_{\tilde{\Lambda}})\geq s+r=r^{\Lambda}$. 

Otherwise, the restriction to $\Lambda'$ of every effective divisor
that is linearly equivalent to $D'-V$ has degree at least $r$. In
this case, the restriction to $\Lambda'$ of every divisor that is
linearly equivalent to $D'$ and contains $V$ has degree at least
$r+s=r^{\Lambda}$. Hence the proposition is satisfied by choosing
$\widetilde{D}=D'$, $\widetilde{\Lambda}=\Lambda'$, and $U$ to be
any effective divisor of degree $r$, which contains $V$, contained in $\widetilde{D}$, and is supported on $\Lambda$.
\end{proof}
The following is the generalization of Corollary \ref{Pushin cor}
to multiple subcurves. Suppose that $r_{1},\ldots,r_{k}$ are non-negative
numbers such that $r_{1}+\ldots+r_{k}\leq r$, and $\Lambda_{1},\ldots,\Lambda_{k}$
are tropical subcurves of $\Gamma$ such that $\Lambda_{i}=(H_{i},w_{i},l_{i})$,
each $H_{i}$ is loopless, and the diameter
of each $\Lambda_{i}$ is $\epsilon_{i}$.

\begin{proposition}
\label{Arrangement multiple graphs} Suppose that for every $i$,
the curve $N_{\epsilon_{i}\cdot (3d)^{d-r_i+1}}(\Lambda_{i})$
deformation retracts onto $\Lambda_{i}$ and contains no weighted
vertices outside $\Lambda_{i}$. Then there is a neighbourhood $\widetilde{\Lambda}_{i}$
of each $\Lambda_{i}$ which is contained in $N_{\epsilon_{i}\cdot (3d)^{d-r_i+1}}(\Lambda_{i})$,
and an effective divisor linearly equivalent to $D$, whose restriction to each
$\widetilde{\Lambda}_{i}$ is of degree at least $r_{i}^{\Lambda_{i}}$. 
\end{proposition}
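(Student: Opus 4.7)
The plan is to induct on $k$, reducing at each stage to the single-subcurve case (Proposition \ref{Arrangement single graph}). A naive iteration is doomed: after arranging the degree bound at $\Lambda_i$, any further modification of the divisor made to improve its behavior near $\Lambda_{i+1}$ could push points \emph{out} of $\widetilde{\Lambda}_i$ and destroy the degree bound there. The remedy is to carry a stronger inductive hypothesis — one asserting that the degree bounds hold for every linearly equivalent effective divisor containing a fixed family of ``anchor'' sub-divisors.

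Concretely, I would prove by induction on $k$ the following strengthened statement: there exist neighborhoods $\widetilde{\Lambda}_i \subseteq N_{\epsilon_i\cdot(3d)^{d-r_i+1}}(\Lambda_i)$, effective anchors $U_i$ of degree $r_i$ supported on $\widetilde{\Lambda}_i$, and an effective divisor $\widetilde{D} \sim D$ with $\widetilde{D} \geq U_1+\cdots+U_k$, such that every effective $E \sim \widetilde{D}$ with $E \geq U_1+\cdots+U_k$ satisfies $\deg(E|_{\widetilde{\Lambda}_i}) \geq r_i^{\Lambda_i}$ for all $i$. The proposition then follows by taking $E=\widetilde{D}$. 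The base case $k=1$ is exactly Proposition \ref{Arrangement single graph}.

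For the inductive step, I would apply the statement at level $k-1$ to obtain $D_1 \sim D$ with anchors $U_1,\dots,U_{k-1}$ supported on neighborhoods $\widetilde{\Lambda}_1,\dots,\widetilde{\Lambda}_{k-1}$. Set $D_1' := D_1 - (U_1+\cdots+U_{k-1})$, which is effective (since $D_1 \geq U_1+\cdots+U_{k-1}$) and of rank at least $r-(r_1+\cdots+r_{k-1}) \geq r_k$, because subtracting an effective divisor of degree $e$ lowers the rank by at most $e$. I then apply Proposition \ref{Arrangement single graph} to $D_1'$ with subcurve $\Lambda_k$ and integer $r_k$, obtaining $F_k \sim D_1'$, a neighborhood $\widetilde{\Lambda}_k$, and an anchor $U_k$ with $F_k \geq U_k$; choose $\widetilde{\Lambda}_k$ small enough to be disjoint from $\widetilde{\Lambda}_1,\dots,\widetilde{\Lambda}_{k-1}$, which is possible since the $\Lambda_i$ are pairwise disjoint and the specified neighborhood bound allows further shrinking. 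Define $\widetilde{D} := F_k + U_1+\cdots+U_{k-1}$. For any effective $E \sim \widetilde{D}$ with $E \geq U_1+\cdots+U_k$, the inductive hypothesis applied to $D_1$ (note $E \sim D_1$ and $E \geq U_1+\cdots+U_{k-1}$) yields the required bound on $\widetilde{\Lambda}_i$ for $i<k$; while for $i=k$, the divisor $E-(U_1+\cdots+U_{k-1})$ is effective, equivalent to $F_k$, dominates $U_k$, and agrees with $E$ on $\widetilde{\Lambda}_k$ by the disjointness of anchor supports, so the conditional bound supplied by Proposition \ref{Arrangement single graph} delivers $\deg(E|_{\widetilde{\Lambda}_k}) \geq r_k^{\Lambda_k}$.

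The main obstacle is precisely the one flagged above: successive modifications risk undoing earlier progress. The ``for every equivalent divisor containing the anchors'' clause furnished by Proposition \ref{Arrangement single graph} is the key ingredient that persists under subsequent linear equivalences, making the induction work. The only auxiliary care needed is disjointness of the $\widetilde{\Lambda}_i$, which permits one to freely exchange restrictions of $E$ for restrictions of $E-(U_1+\cdots+U_{k-1})$ with no additional bookkeeping.
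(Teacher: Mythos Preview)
Your proof is correct and follows essentially the same inductive strategy as the paper: carry anchor divisors $U_i$ supplied by Proposition~\ref{Arrangement single graph}, subtract the accumulated anchors before treating $\Lambda_k$, apply Proposition~\ref{Arrangement single graph} to the difference, and then add the anchors back. One small remark: the disjointness of the $\widetilde{\Lambda}_i$ is neither guaranteed by Proposition~\ref{Arrangement single graph} (which outputs a specific $\widetilde{\Lambda}$, not one you may shrink at will) nor actually needed, since $\deg(E|_{\widetilde{\Lambda}_k}) \geq \deg\bigl((E-U_1-\cdots-U_{k-1})|_{\widetilde{\Lambda}_k}\bigr)$ holds regardless and $E \geq U_1+\cdots+U_k$ already forces $E-(U_1+\cdots+U_{k-1}) \geq U_k$.
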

\begin{proof}
We will construct, using induction on $k$, an effective divisor $D_{k}$,
a seqeunce of effective divisors $U_{1},\ldots,U_{k}$, and neighborhoods
$\widetilde{\Lambda}_{1},\ldots,\widetilde{\Lambda}_{k}$ with the
following properties: for every $i$, $\widetilde{\Lambda}_{i}$ is
contained in $N_{\epsilon_{i}\cdot (3d)^{d-r_i+1}}(\Lambda_{i})$
and contains $\Lambda_{i}$; $U_{i}$ is of degree $r_{i}$ and is
supported on $\widetilde{\Lambda}_{i}$; the restriction to $\widetilde{\Lambda}_{i}$
of any divisor that is linearly equivalent to $D$ and contains $U_{i}$
is of degree at least $r_{i}^{\Lambda_{i}}$; $D_{k}$ is linearly
equivalent to $D$ and contains $U_{1},\ldots,U_{k}$. Then $D_{k}$
satisfies the statement of the proposition.

For $k=1$, this is just Proposition \ref{Arrangement single graph}.
Now suppose that $k$ is at least $2$ and that the claim is true
for $i=k-1$. Then $D_{k-1}$ contains $U_{1},...,U_{k-1}$, hence
$D_{k-1}-U_{1}-\cdots-U_{k-1}$ is effective and of rank at least $r-r_{1}-\cdots-r_{k-1}\geq r_{k}$.
Moreover, $D_{k-1}$ is linearly equivalent to $D$, so we can use
Proposition \ref{Arrangement single graph} again for the the divisor
$D_{k-1}-U_{1}-...-U_{k-1}$ and the subcurve $\Lambda_{k}$. We obtain
an effective divisor $U_{k}$ of degree $r_{k}$, an effective divisor
$D_{k}'$ that contains $U_{k}$, and a neighbourhood $\widetilde{\Lambda}_{k}$
of $\Lambda_{k}$ such that $D_{k}'$ is linearly equivalent to $D_{k-1}-U_{1}-\cdots-U_{k-1}$
and every effective divisor which is linearly equivalent to $D_{k}'$
and contains $U_{k}$ has degree at least $r_{k}^{\Lambda_{k}}$ on
$\widetilde{\Lambda}_{k}$. Now take $D_{k}=D_{k}'+U_{1}+\cdots+U_{k-1}$
and the statement of the induction is satisfied. 
\end{proof}
We are now in a position to prove that the Brill-Noether
locus is closed in the universal Jacobian: 
\begin{proof}[of Theorem \ref{BN-locus is closed-1}]
 Let $(s_{i},[D_{i}])$ be a sequence in $\mbox{Pic}_{h}(G,w)$ converging
to $(s,[D])$, and suppose that for every $i$, $D_{i}$ is a divisor
of degree $d$ and rank at least $r$ on $\Gamma_{s_{i}}$. By passing
to a subsequence, we may assume that the points $s_{i}$ are all in
the interior of the same face $\sigma$ of $\mathbb{R}_{\geq0}^{|E(G)|}$.
By replacing $(G,w)$ with the combinatorial type corresponding to
$\sigma$, we can assume that every curve $\Gamma_{s_{i}}$ is of
type $(G,w)$ and that $s$ is a point of $\sigma$, possibly on the
boundary. 

Recall the map 
\[
\Psi^{d}:\sigma\times(\Gamma_{(1,...,1)})^{d}\to\mbox{Pic}_{h}(G,w)
\]
defined in section \ref{Section: Universal Jacobian}, and let $\beta,\alpha_{i}$
be the natural rescaling maps 

\[
\xymatrix{ & \Gamma_{(1,...,1)}\ar[dl]_{\alpha_{i}}\ar[dr]^{\beta}\\
\Gamma_{s_{i}} &  & \Gamma_{s}
}
\]
(note that every $\alpha_{i}$ is a homeomorphisms while \textbf{$\beta$}
is a retract). Let $v_{1},\ldots,v_{k}$ be the vertices of $\Gamma_{s}$
which are either weighted or whose preimage under $\beta$ is more
than a single point. Let $\Lambda_{1},\ldots,\Lambda_{k}$ be the
disjoint preimages of $v_{1},\ldots,v_{k}$ in $\Gamma_{(1,...,1)}$.
The weight at every point $x$ of $\Gamma_{s}$ equals the genus of
$\beta^{-1}(x)$, hence the preimage of any point other than $v_{1},\ldots,v_{k}$
is a single non weighted point. For every $1\leq j\leq k$, and every $i$, let
$\epsilon_{i}^{j}$ be the diameter of $\alpha_{i}(\Lambda_{j})$.
Fix a rank determining set for $\Gamma_{s}$ that contains $v_{1},...,v_{k}$,
namely $A=\{v_{1},\ldots,v_{k},v_{k+1},\ldots,v_{K}\}$ for some points
$v_{k+1},\ldots,v_{K}$.

Now, let $E=r_{1}v_{1}+\cdots+r_{k}v_{k}+r_{k+1}v_{k+1}+\cdots+r_{K}v_{K}$
be an effective divisor of degree $r$ on $\Gamma_{s}$. We need to
show that $D$ is linearly equivalent to an effective divisor containing
$E^{*}=r_{1}^{\Lambda_{1}}v_{1}+\cdots+r_{k}^{\Lambda_{k}}v_{k}+r_{k+1}v_{k+1}+\cdots+r_{K}v_{K}$.
Since for every $j$, the subcurve $\alpha_{i}(\Lambda_{j})$ collapses
to a point in the limit $\beta(\Lambda_{j})$, we can find $i$ large
enough so that each of the neighbourhoods $\mbox{ }N_{\epsilon_{i}^{j}(3d)^{d-r_j+1}}(\alpha_{i}(\Lambda_{j}))$
deformation retracts onto $\alpha_{i}(\Lambda_{j})$. Since $D_i - (r_{k+1}v_{k+1}+\cdots+r_{K}v_{K})$ has rank at least $r_1 + \cdots + r_k$, Proposition
\ref{Arrangement multiple graphs} implies that there are closed neighborhoods
$\Delta_{i}^{j}$ of each curve $\alpha_{i}(\Lambda_{j})$ such that,
by replacing $D_{i}$ with a linearly equivalent divisor, we can assume
that it contains $r_{k+1}v_{k+1}+\cdots+r_{K}v_{K}$ and its restriction
to $\Delta_{i}^{j}$ has degree at least $r_{j}^{\Lambda_{j}}$. 

$ $Let $\hat{D}_{i}$ be$ $ the preimage of $D_{i}$ in $(\Gamma_{(1,...,1)})^{d}$.
The fact that $\alpha_{i}$ is a homeomorphism implies that the restriction
of $\hat{D}_{i}$ to $\alpha_{i}^{-1}(\Delta_{i}^{j})$ has degree
$r_{j}^{\Lambda_{j}}$ as well.$ $ Hence, for every $j$ we may choose
an effective divisor $C_{i}^{j}$ of degree $r_{j}^{\Lambda_{j}}$
on $\Gamma_{(1,...,1)}$ that is contained in the restriction of $\hat{D}_{i}$
to $\alpha_{i}^{-1}(\Delta_{i}^{j})$. By compactness we may assume
that the sequence $\hat{D}_{i}$ converges in $(\Gamma_{(1,...,1)})^{d}$
to some $\hat{D}$, and similarly that each sequence $C_{i}^{j}$
converges to some $C^{j}$ such that $C^{j}\leq\hat{D}$. Each $C_{i}^{j}$
is supported on $\alpha_{i}^{-1}(\Delta_{i}^{j})$, so $C^{j}$ is
supported on $\underset{i=1}{\overset{\infty}{\cap}}\alpha_{i}^{-1}(\Delta_{i}^{j})$.
Since the diameters of $\Delta_{i}^{j}$ tend to zero as $i$ tends
to infinity, the set $\underset{i=1}{\overset{\infty}{\cap}}\alpha_{i}^{-1}(\Delta_{i}^{j})$
in $\Gamma_{(1,...,1)}$ is exactly $\Lambda_{i}^{j}$. Therefore
$C^{j}$ is supported on $\Lambda_{i}^{j}$ and the degree of the
restriction of $\hat{D}$ to $\Lambda_{j}^{i}$ is at least $r_{j}^{\Lambda_{j}}$
for every $j$. By continuity of $\Psi^{d}$, the pair $(s,\hat{D})$
is mapped to $(s,[D])$ in $\mbox{Pic}_{h}(G,w)$, hence $D$ is equivalent
to a divisor that contains $E^{*}$. 

This is true for every effective divisor $E$ of degree $r$ supported
on the rank determining set $A$, and it follows that $r(D)\geq r$.
\end{proof}

\section{\label{Section: BN rank }The Brill-Noether rank of a tropical curve}

In the classical case, the dimension of the Brill-Noether locus is
an important invariant of an algebraic curve. Among its different
properties, the function that takes a curve $C$ to $\dim(W_{d}^{r}(C))$
is upper semicontinuous on the moduli space of algebraic curves of
a fixed genus. However, this is no longer true for metric graphs (\cite[Example 2.1]{LPP}).
As a substitute for the dimension of the locus, Lim, Payne and Potashnik
introduced the notion of Brill-Noether rank:
\begin{definition}
(\cite{LPP})Let $\text{\ensuremath{\Gamma}}=(G,\ell)$ be a metric
graph and let $r,d$ be non-negative integers. The \emph{Brill-Noether
rank} $w_{d}^{r}(\Gamma)$ is the largest number $\rho$ such that
every effective divisor of degree $r+\rho$ is contained in an effective
divisor $D$ of degree $d$ and rank at least $r$.
\end{definition}
The authors showed that the function $w_{d}^{r}$ is in fact upper
semicontinuous on the space classifying metric graphs of a given genus
(\cite[Theorem 1.6]{LPP}). For the case of a tropical curve we propose
the following defintion:
\begin{definition}
Let $\Gamma=(G,w,\ell)$ be a tropical curve and let $r,d$ be non-negative
integers. The \emph{Brill-Noether rank$ $} $w_{d}^{r}(\Gamma)$
is the largest number $\rho$ such that every effective divisor $E$
of degree $r+\rho$ is contained in an effective divisor $D$ of degree
$d$ and rank at least $r$.
\end{definition}
Recall that, by definition, this means that $E$ and $D$ are divisors
on $\Gamma^{0}$, but the rank of $D$ is computed as a divisor on
the curve $\Gamma_{\epsilon}^{w}$ for any $\epsilon>0$. We will
show in  \ref{BN rank is USC} that with this definition, the
function assigning a tropical curve its Brill-Noether rank is upper
semicontinuous on $M_{g}^{\text{tr}}$, the moduli space of tropical
curve. We briefly review the construction of $M_{g}^{\text{{tr}}}$,
following \cite{Viviani}. 

Fix a combinatorial type $(G,w)$. Recall that we may identify points
in $\mathbb{R}_{>0}^{E(G)}$ with curves of type $(G,w)$, by assigning
lengths to the edges of $G$ according to the coordinates of the points.
We may then identify points on the boundary of $\mathbb{R}_{\geq0}^{E(G)}$
with curves in which the appropriate edges are contracted. To get
a unique representation we should identify curves obtained from weight
preserving symmetries. Let $\mbox{Aut}(G,w)$ be the group of weight
preserving automorphisms of $G$. Then $\mbox{Aut}(G,w)$ acts on
$\mathbb{R}_{\geq0}^{E(G)}$ by simply permutating coordinates. Denote
\[
\overline{C(G,w)}=\mathbb{R}_{\geq0}^{E(G)}/\mbox{Aut}(G,w)
\]
the closure of the cell classifying all curves of type $(G,w)$. Consider
$\coprod\overline{C(G,w)}$$ $, the disjoint union of all the cells,
and let $\sim$ be the equivalence relation identifying two points
if they correspond to isomorphic curves. 
\begin{definition}
The moduli space of tropical curve of genus $g$ is 
\[
M_{g}^{\text{tr}}=\coprod\overline{C(G,w)}/\sim\mbox{.}
\]
\end{definition}
\begin{theorem}
\label{BN rank is USC}The Brill-Noether rank is upper semicontinuous
on the moduli space of tropical curves of genus $g$.
\end{theorem}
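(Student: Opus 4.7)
The plan is to deduce upper semicontinuity of $w_d^r$ directly from the closedness of the universal Brill-Noether locus established in Theorem \ref{BN-locus is closed-1}. By definition of upper semicontinuity, it suffices to show that if $\Gamma_{s_i} \to \Gamma_s$ in $M_g^{\mbox{tr}}$ and $w_d^r(\Gamma_{s_i}) \geq \rho$ for every $i$, then $w_d^r(\Gamma_s) \geq \rho$. After passing to a subsequence, I may assume that all the $\Gamma_{s_i}$ lie in the interior of a single face of some cell $\overline{C(G,w)}$, so that they share a common combinatorial type $(G,w)$ with length parameters $s_i \to s$ in $\sigma = \mathbb{R}_{\geq 0}^{|E(G)|}$, where $s$ is possibly on the boundary and records the (more degenerate) combinatorial type of the limit $\Gamma_s$.

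Fix an arbitrary effective divisor $E$ on $\Gamma_s$ of degree $r+\rho$; the goal is to exhibit an effective divisor $D$ of degree $d$ and rank at least $r$ on $\Gamma_s$ that contains $E$. Using the rescaling map $\alpha_s \colon \Gamma_{(1,\ldots,1)} \to \Gamma_s$, choose a lift of the support of $E$ to $\Gamma_{(1,\ldots,1)}$, and push this lift forward by $\alpha_{s_i}$ to obtain effective divisors $E_i$ of degree $r+\rho$ on $\Gamma_{s_i}$ with the property that $(s_i, E_i) \to (s, E)$ in $\sigma \times (\Gamma_{(1,\ldots,1)})^{r+\rho}$. Since $w_d^r(\Gamma_{s_i}) \geq \rho$, each $E_i$ sits inside an effective divisor $D_i = E_i + F_i$ on $\Gamma_{s_i}$ of degree $d$ and rank at least $r$, where $F_i$ is effective of degree $d-r-\rho$. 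Lifting each $F_i$ to $(\Gamma_{(1,\ldots,1)})^{d-r-\rho}$ and using compactness of this symmetric product, I pass to a subsequence along which these lifts converge; let $F$ be the image on $\Gamma_s$ of the limit under $\alpha_s$, and set $D = E + F$. By continuity of the universal Picard map $\sigma \times (\Gamma_{(1,\ldots,1)})^d \to \mbox{Pic}_d(G,w)$, the classes $(s_i, [D_i])$ converge to $(s, [D])$. Each $(s_i, [D_i])$ lies in $W_d^r(G,w)$, which is closed by Theorem \ref{BN-locus is closed-1}, so $(s, [D]) \in W_d^r(G,w)$ and $r_{\Gamma_s}(D) \geq r$. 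As $D$ contains $E$ by construction, this verifies the defining condition for $w_d^r(\Gamma_s) \geq \rho$ at this particular $E$, and since $E$ was arbitrary the conclusion follows.

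The main obstacle in this reduction is really just the technical matching between the abstract topology on $M_g^{\mbox{tr}}$ and the explicit topology on $\mbox{Pic}_d(G,w)$: one must check that lifting the arbitrary divisor $E$ on the possibly degenerate limit $\Gamma_s$ through the rescaling maps produces approximations $(s_i, [D_i])$ that genuinely converge to $(s, [D])$ in the sense required by Theorem \ref{BN-locus is closed-1}. This is handled by choosing the lifts of $E$ and the $F_i$ simultaneously in the fixed ambient curve $\Gamma_{(1,\ldots,1)}$, where compactness of the symmetric products and continuity of $\Psi^d$ do all the work. Once that bookkeeping is in place, the heavy lifting of the paper, namely Propositions \ref{Arrangement single graph} and \ref{Arrangement multiple graphs} culminating in Theorem \ref{BN-locus is closed-1}, supplies the closedness that immediately implies the desired upper semicontinuity.
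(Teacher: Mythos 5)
Your proposal is correct and follows essentially the same route as the paper's proof: reduce to the closure of a single cell, approximate $E$ by divisors $E_i$ on the curves $\Gamma_{s_i}$, complete these to divisors $D_i$ with $(s_i,[D_i])\in W_d^r(G,w)$ using the hypothesis $w_d^r(\Gamma_{s_i})\geq\rho$, and then use compactness together with the closedness of $W_d^r(G,w)$ (Theorem \ref{BN-locus is closed-1}) to obtain a limit divisor $D$ of rank at least $r$ containing $E$. Your explicit bookkeeping with lifts to $\Gamma_{(1,\ldots,1)}$ and continuity of $\Psi^d$ merely spells out the convergence step that the paper's proof states more tersely.
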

\begin{proof}
It suffices to show that the Brill-Noether rank is upper semicontinuous
on the closure of each of the cells comprising $M_{g}^{\text{tr}}$.
Fix a combinatorial type $(G,w)$, and let $\sigma=\mathbb{R}_{\geq0}^{|E(G)|}$.
Let $[s_{i}]$ be a sequence in $\overline{(G,w)}$ such that the
Brill-Noether rank of each $\Gamma_{s_{i}}$ is at least $\rho$,
and suppose that $[s_{i}]$ converges to $[s]$. We may choose representatives
$s_{i}$ so that they converge to $s$ in $\sigma$. Now, we need
to show that $w_{d}^{r}(\Gamma_{s})\geq\rho$.

Let $E$ be an effective divisor of degree $r+\rho$ on $\Gamma_{s}$.
For every $i$, choose an effective divisor $E_{i}$ of degree $r+\rho$
on $\Gamma_{s_{i}}$, so that the sequence $(s_{i},[E_{i}])$ converges
to $(s,[E])$ in $\mbox{Pic}_{r+\rho}(G,w)$. Since each of the curves
$\Gamma_{s_{i}}$ has Brill-Noether rank at least $\rho$, there exists
a divisor $D_{i}$ containing $E_{i}$, such that each $[D_{i}]$
is in $W_{d}^{r}(G,w)$. By passing to a subsequence we may assume
that the sequence $(s_{i},[D_{i}])$ converges in $\mbox{Pic}_{d}(G,w)$
to some $(s,[D])$ such that $E$ is contained in $D$. By Theorem
\ref{BN-locus is closed-1}, $W_{d}^{r}(G,w)$ is closed in $\mbox{Pic}_{d}(G,w)$,
so $(s,[D])$ is in $W_{d}^{r}(G,w)$ as well, and the rank of $D$
is at least $r$.
\end{proof}
To conclude the paper we wish to examine the relation between the
Brill-Noether rank of a tropical curve and the dimension of the Brill-Noether
locus of an algebraic curve.

Let $X_{0}$ be a nodal curve. The \emph{dual graph} of $X_{0}$
is the metric graph $(G,\ell)$, where the vertices of $G$ are identified
with the irreducible components of $X_{0}$, and an edge of length
$1$ connects two vertices whenever the two corresponding components
meet. In particular, loop edges correspond to self intersection of
components of the special fiber. 

As defined in \cite{CaporasoAmini}, the \emph{dual weighted graph}
of $X_{0}$ is the weighted metric graph $(G,w,\ell)$, where $(G,\ell)$
is the dual graph of $X_{0}$, and the weight at a vertex $v$ is
the geometric genus of the corresponding component. Note that the
genus of the dual weighted graph is the same as the genus of $X_{0}$.

In \cite{BakerSpecialization} the author presented a specialization
lemma relating the rank of divisors on algebraic curves with the rank
of divisors on their dual graphs. This lemma was generalized in \cite[Theorem 4.9]{CaporasoAmini}
and we will need the following special case:
\begin{theorem}
(\cite[Theorem 1.2]{AminiBaker})Let $X$ be a smooth projective curve
over a discretely valued field with a regular semistable model $\mathcal{X}$
whose special fiber $X_{0}$ has dual weighted graph $\Gamma=(G,w,\ell)$.
Then there is a degree preserving specialization map $\tau:\mbox{Div}(X)\to\mbox{Div}(G)$
such that for every $D\in\mbox{Pic}(X)$, $r_{X}(D)\leq r_{\Gamma}(\tau(D))$.
\end{theorem}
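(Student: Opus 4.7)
The plan is to prove this in three stages, following the strategy of Amini and Baker: construct $\tau$ via intersection theory on $\mathcal{X}$, verify that $\tau$ descends to linear equivalence classes, and then establish the rank inequality by reducing to Baker's original (unweighted) specialization lemma via the characterization $(\ref{ABrank})$ of the weighted rank.

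First, for $D\in\mbox{Div}(X)$ take its scheme-theoretic closure $\overline{D}$ in $\mathcal{X}$; regularity of $\mathcal{X}$ ensures $\overline{D}$ is Cartier. Define $\tau(D)(v)=(\overline{D}\cdot C_v)$, the intersection multiplicity with the irreducible component $C_v$ indexed by the vertex $v$. The identity $\sum_v(\overline{D}\cdot C_v)=(\overline{D}\cdot X_0)=\deg(D)$ gives degree preservation. To check that $\tau$ descends to $\mbox{Pic}$, decompose the principal divisor of a rational function $f$ on $\mathcal{X}$ as $H+V$ with $V=\sum_v a_v C_v$ vertical and $H$ horizontal. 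The orthogonality $(H+V)\cdot C_w=0$ for each $w$, combined with the fact that the matrix $(C_v\cdot C_w)_{v\neq w}$ encodes the combinatorial Laplacian of $G$, implies that $\tau(\mbox{div}_X f)=\mbox{div}_\Gamma(\varphi)$ for the piecewise linear function $\varphi$ with $\varphi(v)=a_v$.

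For the rank inequality, I would invoke $(\ref{ABrank})$, which gives $r_\Gamma(\tau(D))=\min_{0\leq F\leq\mathcal{W}}(\deg F + r_{\Gamma^0}(\tau(D)-2F))$. Fix such an $F=\sum_v f_v v$ and build an effective divisor $\widetilde{F}$ on $X$ with $\tau(\widetilde{F})=2F$ and $r_X(D-\widetilde{F})\geq r_X(D)-\deg F$. The construction exploits that each component $C_v$ has geometric genus $w(v)\geq f_v$: by Riemann--Roch on $C_v$, any effective divisor of degree $f_v$ on $C_v$ moves in a linear system of positive dimension, so one may select $2f_v$ points on $C_v$ splitting into two linearly equivalent effective divisors of degree $f_v$ and lift them to horizontal sections of $\mathcal{X}$. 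Baker's original specialization lemma \cite{BakerSpecialization} then yields $r_X(D-\widetilde{F})\leq r_{\Gamma^0}(\tau(D)-2F)$, whence $r_X(D)\leq \deg F + r_{\Gamma^0}(\tau(D)-2F)$. Minimizing over $F$ delivers $r_X(D)\leq r_\Gamma(\tau(D))$.

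The hard part will be justifying $r_X(D-\widetilde{F})\geq r_X(D)-\deg F$. A naive horizontal lift of $2\deg F$ points would only give $r_X(D-\widetilde{F})\geq r_X(D)-2\deg F$, reproducing the unweighted bound and wasting the contribution from the weights. The sharpening requires genuinely leveraging Riemann--Roch on each $C_v$ to realize the two halves of $\widetilde{F}|_{C_v}$ as members of a single pencil, so that one half is recoverable from the other via a rational function on $X$, and then assembling these pencils consistently across all components. Controlling this assembly at the nodes so that the auxiliary rational function extends to the entire model $\mathcal{X}$ rather than existing only on the special fiber is the technical heart of the Amini--Baker argument, and is precisely where the regularity of $\mathcal{X}$ enters crucially.
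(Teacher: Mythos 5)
First, a point of comparison: the paper does not prove this statement at all --- it is imported verbatim from Amini--Baker (with the weighted specialization also appearing as \cite[Theorem 4.9]{CaporasoAmini}) --- so your sketch is to be measured against those proofs. Your first two stages are fine and standard: defining $\tau$ by taking closures in the regular model and intersecting with the components of $X_0$, checking $\sum_v(\overline{D}\cdot C_v)=\deg D$, and verifying via the intersection matrix that principal divisors specialize to principal divisors is exactly Baker's construction from \cite{BakerSpecialization}, and it is not where the weighted statement differs from the unweighted one.

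The gap is in the third stage, and it is not merely a technical assembly problem at the nodes: the intermediate claim itself is false. You need, for a fixed $F$ with $0\leq F\leq\mathcal{W}$, an effective divisor $\widetilde{F}$ on $X$ with $\tau(\widetilde{F})=2F$ and $r_X(D-\widetilde{F})\geq r_X(D)-\deg F$. Take $X$ non-hyperelliptic of genus $g\geq 3$ with good reduction, so $\Gamma$ is a single vertex $v$ of weight $g$ and $\mathcal{W}=g\cdot v$, and take $D=K_X$, $F=v$. For every effective $\widetilde{F}$ of degree $2$ one has $h^0(K_X-\widetilde{F})=g-3+h^0(\widetilde{F})=g-2$, so $r_X(K_X-\widetilde{F})=g-3<r_X(K_X)-\deg F=g-2$; no admissible $\widetilde{F}$ exists. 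The mechanism you propose for building $\widetilde{F}$ also fails on its own terms: in the relevant range $f_v\leq w(v)=g(C_v)$, Riemann--Roch only gives $r\geq f_v-w(v)\leq 0$, and a general effective divisor of degree $f_v$ on $C_v$ is rigid --- already for $f_v=1$ two distinct linearly equivalent points exist only if $C_v$ is rational, and for $f_v=2$ only if $C_v$ is hyperelliptic --- so the two ``halves'' of $\widetilde{F}|_{C_v}$ cannot in general be chosen in a common pencil. The family of inequalities you are after, $r_{\Gamma^0}(\tau(D)-2F)\geq r_X(D)-\deg F$ for the minimizing $F$ in (\ref{ABrank}), is of course true (it is equivalent to the theorem), but it cannot be reached by subtracting a cleverly chosen effective divisor from $D$ upstairs and quoting the unweighted lemma. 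The actual arguments of \cite{AminiBaker} and \cite[Theorem 4.9]{CaporasoAmini} work differently: they specialize the entire $(r+1)$-dimensional space of sections of $D$ (a reduced linear series) to each component $C_v$ and apply Clifford's theorem and Riemann--Roch \emph{on $C_v$} to the specialized series, forcing the local degree at $v$ to be at least $r+\min\{r,w(v)\}$ after moving within the class; this componentwise analysis of the specialized linear series --- formalized by Amini--Baker through metrized complexes --- is the ingredient your sketch is missing, and it is the algebraic counterpart of the Clifford/Riemann--Roch step in Corollary \ref{Pushin cor} of this paper.
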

In \cite[Theorem 1.7]{LPP} the authors proved a similar inequality
relating the dimension of the Brill-Noether locus of an algebraic
curve with the Brill-Noether rank of a tropical curve. The proof used
Baker's specialization lemma together with the following fact:
\begin{proposition}
(\cite[Proposition 5.1]{LPP}) Let $X$ be a smooth projective curve.
Suppose $W_{d}^{r}(X)$ is not empty, and let $E$ be an effective
divisor of degree $r+\dim(W_{d}^{r}(X))$ on $X$. Then there is a
divisor $D$ whose class is in $W_{d}^{r}(X)$ such that $D-E$ is
effective.
\end{proposition}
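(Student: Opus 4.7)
The plan is to pass from $\mathrm{Pic}^d(X)$ to the symmetric product $X^{(d)}$ via the Abel-Jacobi map, and conclude by a dimension count combined with properness. Let $V$ be an irreducible component of $W_d^r(X)$ of maximal dimension $w := \dim W_d^r(X)$. Because $V$ is not contained in $W_d^{r+1}(X)$, a generic $[L] \in V$ satisfies $h^0(L) = r+1$, so the preimage $\tilde V$ of $V$ under the Abel-Jacobi map $u_d : X^{(d)} \to \mathrm{Pic}^d(X)$ is generically a $\mathbb{P}^r$-bundle over $V$, giving $\dim \tilde V = w + r$.

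Next, I would form the incidence variety
\[
\mathcal I = \{(D, F) \in \tilde V \times X^{(r+w)} : D \geq F\}.
\]
The first projection $\mathcal I \to \tilde V$ has finite fibres (an effective divisor of degree $d$ contains only finitely many sub-divisors of degree $r+w$), so $\dim \mathcal I = w + r$. The second projection $p : \mathcal I \to X^{(r+w)}$ is a proper morphism between projective varieties, so its image is closed in $X^{(r+w)}$. Since $X^{(r+w)}$ is irreducible of dimension $w + r$, showing that $p$ is dominant (equivalently, generically finite) would force the image to equal $X^{(r+w)}$, and the fibre over any $E \in X^{(r+w)}$ would then contain a pair $(D, E)$ with $D \geq E$ and $[D] \in V \subseteq W_d^r(X)$, as required.

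The main obstacle is precisely the dominance of $p$: concretely, the need to exhibit at least one $E_0 \in X^{(r+w)}$ for which $\{[L] \in V : h^0(L(-E_0)) \geq 1\}$ is non-empty and zero-dimensional. The natural strategy is to pick a smooth point $[L_0] \in V$ with $h^0(L_0) = r+1$, a general $D_0 \in |L_0|$, and a sub-divisor $E_0 \leq D_0$ of degree $r + w$, then verify that at $[L_0]$ the $r + w$ base-point conditions corresponding to the points of $E_0$ cut $V$ down to a finite set. This infinitesimal independence statement rests on the description of the tangent space $T_{[L_0]} W_d^r(X)$ via the cup-product (Petri) map, as in ACGH Chapter IV, and is where the subtlety of classical Brill-Noether theory enters the argument. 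Once independence of the conditions is established, properness together with the dimension count above forces $p$ to be surjective, yielding the required effective divisor $D \geq E$ in every class fibre, for every $E$.
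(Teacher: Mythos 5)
First, a remark on the comparison: the paper does not prove this statement at all --- it is quoted verbatim as \cite[Proposition 5.1]{LPP} --- so your attempt has to be measured against the proof given there (and its classical antecedents), not against anything in this paper.

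Your setup is fine and the reduction is logically sound: $\mathcal{I}$ is closed in $\tilde V\times X^{(r+w)}$, $p$ is proper, and if you could exhibit a single $E_0$ with nonempty finite fibre, upper semicontinuity of fibre dimension plus properness would indeed force surjectivity. But the dominance of $p$ \emph{is} the entire content of the proposition, and you do not prove it; you only name a strategy, and the strategy is the wrong tool here. The statement is asserted for an arbitrary smooth curve and an arbitrary maximal component $V$, which may have excess dimension $w>\rho(g,r,d)$, may be everywhere singular or non-reduced as a scheme, and may have non-injective Petri map; at such points $T_{[L_0]}W_d^r(X)$ can be all of $H^1(\mathcal{O}_X)$, and there is no reason the $r+w$ base-point conditions are infinitesimally independent, nor that $V\cap\bigl(E_0+W_{d-r-w}(X)\bigr)$ is finite for \emph{any} $E_0$ --- this is exactly the excess-intersection phenomenon that makes $w>\rho$ possible in the first place, so ``general position'' cannot rescue it. Two smaller gaps: the assertion that $V\not\subseteq W_d^{r+1}(X)$ is unjustified and is actually false when $r\le d-g$ (then $W_d^r=W_d^{r+1}=\operatorname{Pic}^d(X)$; that case must be split off and handled directly, which is easy); and even granting it, $\dim\tilde V=w+r$ requires controlling the deeper strata $V\cap W_d^{r+k}$, over which the Abel--Jacobi fibres jump.

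The known proof obtains nonemptiness from global positivity rather than infinitesimal transversality. Write $E=x_1+\cdots+x_{r+w}$ and impose the points one at a time: the first $r$ points cost nothing, since $h^0(L)\ge r+1$ forces $h^0(L-x_1-\cdots-x_r)\ge 1$ for every $[L]\in W_d^r(X)$, so one starts with a projective locus of effective divisors whose image in $\operatorname{Pic}^d(X)$ has dimension $w$. Each of the remaining $w$ points $x$ is then imposed by intersecting with the divisor $X_x=\{D:D\ge x\}$ in the relevant symmetric product, which is \emph{ample} (equivalently, one intersects in $\operatorname{Pic}$ with translates of the theta divisor $W_{g-1}$, ample by Riemann's theorem). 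Ampleness guarantees that the intersection with a positive-dimensional projective set is nonempty and cuts the dimension by at most one, so after $w$ steps the locus is still nonempty; this is precisely the one $E_0$-free ingredient your argument is missing. If you want to keep your incidence-variety architecture, replace the Petri-map step by this ampleness argument; as written, the proof has a genuine gap at its crucial step.
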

In what follows we shall use the above proposition together with the
specialization lemma for weighted graphs to do the same for the dual
weighted graph of the special fiber of a curve:
\begin{theorem}
\label{Specialization BN}Let $X$ be a smooth projective curve over
a discretely valued field with a regular semistable model whose special
fiber has weighted dual graph $\Gamma$. Then for every $r,d\in\mathbb{N}$,
$\dim W_{d}^{r}(X)\leq w_{d}^{r}(\Gamma)$. 
\end{theorem}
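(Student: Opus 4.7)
The plan is to translate an effective divisor $E'$ on $\Gamma$ to an effective divisor on $X$, invoke Proposition 5.1 of \cite{LPP} on $X$ to produce a candidate in $W_d^r(X)$, and then specialize back to $\Gamma$ via the Amini--Baker specialization lemma. If $W_d^r(X)$ is empty then $\dim W_d^r(X) = -\infty$ and the inequality is trivial, so I assume $W_d^r(X)$ is non-empty and put $\rho = \dim W_d^r(X)$. To conclude $w_d^r(\Gamma) \geq \rho$, I fix an arbitrary effective divisor $E'$ on $\Gamma$ of degree $r+\rho$ and seek an effective divisor of degree $d$ and rank at least $r$ on $\Gamma$ that contains $E'$.

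The central step is to lift $E'$ to an effective divisor $E$ on $X$ with $\tau(E) = E'$. Since the regular semistable model endows $\Gamma$ with integer edge lengths, after a suitable finite (tamely ramified) base change $K \subseteq K'$ every point in the support of $E'$ can be arranged to be a vertex of the refined dual graph of the pulled-back model; that vertex corresponds to a smooth point of the new special fiber, which lifts by regularity to a section of the pulled-back semistable model. Taking these sections with the prescribed multiplicities produces the desired $E$. The dimension of the Brill--Noether locus is invariant under the base change, so I may replace $X$ by $X_{K'}$ without loss of generality.

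Once $E$ is in place, \cite[Proposition 5.1]{LPP} supplies an effective divisor $D$ on $X$ of degree $d$ with $[D] \in W_d^r(X)$ and $D - E \geq 0$. Applying $\tau$ yields $\tau(D) - E' = \tau(D - E) \geq 0$, and the Amini--Baker specialization lemma gives $r_{\Gamma}(\tau(D)) \geq r_X(D) \geq r$; since $\tau$ is degree-preserving, $\tau(D)$ is an effective divisor of degree $d$ and rank at least $r$ containing $E'$, as required. Since $E'$ was an arbitrary effective divisor of degree $r+\rho$, the definition of the Brill--Noether rank gives $w_d^r(\Gamma) \geq \rho$.

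The main obstacle is the lifting step, namely showing that every effective divisor on $\Gamma$ arises as the specialization of an effective divisor on $X$ after base change. This is where the regularity and semistability hypotheses are used crucially, through the standard fact that base-changing along a tamely ramified extension refines the special fiber by inserting chains of smooth rational curves over the nodes, whose components realize precisely the rational points along the edges of $\Gamma$. Once this is granted, the rest of the argument is a formal consequence of the two quoted results.
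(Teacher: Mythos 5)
There is a genuine gap in your lifting step. The specialization map only hits divisors supported at \emph{rational} points of $\Gamma$: the edges of the dual graph of a regular semistable model have length $1$, a degree-$e$ base change (followed by resolution) subdivides each edge into segments of length $1/e$, and the closed points of $X$ over any finite extension of $K$ (indeed over $\bar{K}$) specialize to points of $\Gamma$ whose distances from the vertices lie in $\mathbb{Q}$. So your claim that ``after a suitable finite base change every point in the support of $E'$ can be arranged to be a vertex of the refined dual graph'' is only true when those points sit at rational distances along the edges. But the Brill--Noether rank quantifies over \emph{all} effective divisors of degree $r+\rho$ on the metric graph, including divisors supported at irrational points, and such a divisor is never of the form $\tau(E)$ for any $E\in \operatorname{Div}(X_{K'})$, no matter which finite base change you take. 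Your argument therefore only proves the inequality against rational test divisors, which is not enough by itself to bound $w_d^r(\Gamma)$ from below.

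This is exactly where the paper does extra work: it first runs your argument (lift a rational $E$ to $E^X$, apply \cite[Proposition 5.1]{LPP} to get $D^X$ containing $E^X$ with $[D^X]\in W_d^r(X)$, and specialize using the Amini--Baker lemma), and then removes the rationality hypothesis by a limiting argument. Given an arbitrary effective $E$ of degree $r+\rho$, one approximates it by rational effective divisors $E_i\to E$, obtains for each $i$ a divisor $D_i$ of degree $d$ and rank at least $r$ containing $E_i$, and uses compactness of $\operatorname{Pic}_d(\Gamma)$ to pass to a convergent subsequence $D_i\to D$ with $D\geq E$; the closedness of $W_d^r(\Gamma)$, i.e.\ the upper semicontinuity of the rank established in Theorem \ref{BN-locus is closed-1}, then guarantees $r(D)\geq r$. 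To repair your proof you would need to add this approximation-and-limit step (or some substitute for it), and it genuinely requires the semicontinuity result --- the formal consequences of the two quoted results alone do not suffice.
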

\begin{proof}
Denote the dimension of $W_{d}^{r}(X)$ by $\rho$. To show that the
above inequality is true we must show that every effective divisor
$E$ of degree $r+\rho$ is contained in an effective divisor $D$
of degree $d$ and rank at least $r$. Note that by definition, both
$D$ and $E$ are divisors supported on $\Gamma^{0}$. 

We start by showing that the statement is true whenever $E$ is a
rational divisor, namely that its distance from any vertex is rational.
Let $E$ be such a divisor. Then $E$ is the specialization of some
effective divisor $E^{X}$ of degree $r+\rho$ on $X$. Since $w_{d}^{r}(X)=\rho$,
it follows by \cite[Proposition 5.1]{LPP} that $E^{X}$ is contained
in an effective divisor $D^{X}$ of degree $d$ and rank at least
$r$, which specializes to a divisor $D$ that contains $E$. By the
Specialization Lemma, the rank of $D$ on $\Gamma$ is at least $r$.

Now, let $E$ be any effective divisor on $\Gamma^{0}$ and let $(E_{i})$
be a sequence of rational effective divisors converging to $E$. For
each $i$, choose a divisor $D_{i}$ of rank $r$ that contains
$E_{i}$. By compactness of $\mbox{Pic}_{d}(\Gamma)$, there is a
subsequence of $\{D_{i}\}$ which converges to a divisor $D$ that
contains $E$. Since each $D_{i}$ is in $W_{d}^{r}(\Gamma)$,
and $W_{d}^{r}(\Gamma)$ is closed, $D$ is of rank $r$
as well.
\end{proof}

\thanks{\textbf{Acknowldegments. }The author thanks S. Payne for suggesting the problem and for many helpful comments and discussions, and to the referee for his helpful remarks; the author gratefully acknowledges the hospitality provided by the Max Planck Institute for Mathematics (MPIM) during the spring of 2012 where much of this work was carried out. 
}

\

\noindent
 Yoav Len - yoav.len@yale.edu 
 
 \noindent
 Mathematics Department, Yale University
 
  \noindent
10 Hillhouse Ave, New Haven, CT, 06511 (USA)

\end{document}